\newtheorem{definition}{Definition}[section]
\newtheorem{observation}[definition]{Observation}
\newtheorem{theorem}[definition]{Theorem}
\newtheorem{corollary}[definition]{Corollary}
\newtheorem{lemma}[definition]{Lemma}
\newtheorem*{noCounterThm}{Theorem}
\newtheorem{property}{Property}
\newcommand{\tw}{\operatorname{tw}}
\newcommand{\veident}{\mathbin{\begin{tikzpicture}[scale = 1, baseline = -0.3ex]
		\draw [line width=.45mm, black] (-.075,.13) -- (.075,.13);
		\draw [black, fill = black, radius = 2pt] (0,0) circle;
		\end{tikzpicture}}}
\newcommand{\vident}{\mathbin{\begin{tikzpicture}[scale = 1, baseline = -0.3ex]
		\draw [black, fill = black, radius = 2pt] (0,.05) circle;
		\end{tikzpicture}}}
\newcommand{\eident}{\mathbin{\begin{tikzpicture}[scale = 1, baseline = -0.3ex]
		\draw [line width=.45mm, black] (-.075,0) -- (.075,0);
		\draw [line width=.45mm, black] (-.075,.13) -- (.075,.13);
		\end{tikzpicture}}}
\title{Cycle Decompositions and Constructive Characterizations}
\author{Irene Heinrich and Manuel Streicher}
\begin{document}

\maketitle

\begin{abstract}
	Decomposing an Eulerian graph into a minimum respectively maximum number of edge disjoint cycles is an NP-complete problem. We prove that an Eulerian graph decomposes into a unique number of cycles if and only if it does not contain two edge disjoint cycles sharing three or more vertices.
	To this end, we discuss the interplay of three binary graph operators leading to novel constructive characterizations of two subclasses of Eulerian graphs.
	This enables us to present a polynomial-time algorithm which decides whether the number of cycles in a cycle decomposition of a given Eulerian graph is unique.
\end{abstract}

\section{Introduction}
It is well-known that a graph is Eulerian if and only if its edge set can be decomposed into cycles (cf. \cite{Fleischner90}).
The decision problem whether an Eulerian graph can be decomposed into at most $k$ cycles is NP-complete as a consequence of \cite{Peroche}.
Also the corresponding maximization problem is NP-complete, cf. \cite{Holyer81}.

Our contribution is to give two equivalent characterizations for the class of Eulerian graphs where both numbers -- the minimum and the maximum amount of cycles in a cycle decomposition -- coincide. We show that those are exactly the graphs that can be constructed from the set of Eulerian multiedges using a finite number of  vertex-identifications and vertex-edge-identifications which will be introduced and discussed in Section \ref{sec: construction operations}. This constructive characterization then enables us to prove the following statement:

\begin{noCounterThm}[\ref{thm: main} - \emph{shortened version}]
	Let $G$ be an Eulerian graph. The number of cycles in a cycle decomposition of $G$ is unique if and only if no two edge-disjoint cycles in $G$ intersect more than twice.
\end{noCounterThm}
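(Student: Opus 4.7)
I will prove the two directions of the equivalence separately: the forward direction (via contrapositive) by an explicit combinatorial construction, and the converse by invoking the constructive characterization announced in Section~\ref{sec: construction operations}.

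For the forward direction, suppose $G$ contains two edge-disjoint cycles $C_1$ and $C_2$ sharing vertices $v_1, \ldots, v_k$ with $k \geq 3$. These split $C_1$ into arcs $P_1, \ldots, P_k$ (with $P_i$ joining $v_i$ to $v_{i+1}$) and $C_2$ into arcs $Q_1, \ldots, Q_k$ according to the cyclic order of shared vertices on $C_2$; in $H := C_1 \cup C_2$ every shared vertex has degree~$4$. One decomposition of $H$ is the original $\{C_1, C_2\}$, of cardinality~$2$. I will construct a second of different cardinality by \emph{rewiring} at the shared vertices: at each $v_i$, swap the pairings of the incident half-edges so that arcs of $C_1$ are concatenated with arcs of $C_2$ rather than with themselves. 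The resulting Eulerian subgraph on $E(H)$ decomposes into a number of simple cycles different from~$2$ --- the smallest instance is the ``double triangle,'' where two parallel triangles on three common vertices admit both a decomposition into two triangles and one into three digons. Extending either decomposition of $H$ by a fixed cycle decomposition of $G - E(H)$ produces two cycle decompositions of $G$ of different cardinalities.

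For the converse direction, I invoke the constructive characterization from Section~\ref{sec: construction operations}: an Eulerian graph has no two edge-disjoint cycles sharing three or more vertices if and only if it is obtainable from Eulerian multiedges by finitely many vertex-identifications and vertex-edge-identifications. Induction on the length of the construction then shows that every such graph has a unique cycle-decomposition number. The base case is trivial, since the only simple cycles in an Eulerian multiedge are the digons between its two endpoints. For the inductive step, a vertex-identification joins two factors at a cut vertex, so every simple cycle of the glued graph lies entirely in one factor and the cycle count is additive; vertex-edge-identification is handled analogously, after analyzing how simple cycles may traverse the identified edge.

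The main obstacle will be the inductive step for vertex-edge-identification, where the shared edge allows cycles to cross between the two factors. I expect to show that any cycle decomposition of the glued graph factors canonically through decompositions of the two pieces; the argument will rely on the hypothesis that no two edge-disjoint cycles share three or more vertices in order to constrain how the shared edge can be used within any cycle decomposition, so that the total cycle count remains the sum of the factors' counts regardless of the chosen decomposition.
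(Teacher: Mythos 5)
Both directions of your proposal have genuine gaps. In the forward direction, the rewiring construction is not pinned down, and the key claim -- that swapping the transitions at the shared vertices yields a decomposition of $E(C_1\cup C_2)$ into a number of cycles different from $2$ -- is simply not true for every admissible choice of swaps. For example, if $C_1$ and $C_2$ share four vertices $v_1,v_2,v_3,v_4$ occurring in the same cyclic order on both cycles, with arcs $P_{i,i+1}\subseteq C_1$ and $Q_{i,i+1}\subseteq C_2$, then $P_{12}Q_{23}P_{34}Q_{41}$ and $Q_{12}P_{23}Q_{34}P_{41}$ are two simple cycles obtained by a legitimate ``swap at every shared vertex,'' and they again form a decomposition of size exactly $2$; other swap choices can produce a single closed trail that is not a simple cycle at all. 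Moreover the cyclic orders of the shared vertices on $C_1$ and on $C_2$ need not agree, so the double-triangle/digon picture does not generalize verbatim. You must make a specific choice and prove the bound $\nu(C_1\cup C_2)\geq 3$. The paper (Lemma \ref{lem:2cyc}) does this by fixing only three common vertices $v_1,v_2,v_3$, choosing a $v_1$-$v_2$ path $P_i\subseteq C_i$ avoiding $v_3$ for $i\in\{1,2\}$, and observing that $P_1\cup P_2$ is even while $v_3$ retains degree $4$ in $H-E(P_1\cup P_2)$, so that $\nu(H)\geq 2+1=3>2=c(H)$; the extension from $H$ to $G$ via a decomposition of the even graph $G-E(H)$ is then as you describe.

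In the converse direction your argument is circular: the ``constructive characterization from Section \ref{sec: construction operations}'' that you invoke (no two edge-disjoint cycles share three or more vertices if and only if the graph is built from Eulerian multiedges by $\vident$ and $\veident$) is precisely the equivalence (\ref{itm: G2cactus})$\Leftrightarrow$(\ref{itm: G in mathcalG}) of Theorem \ref{thm: main}, i.e.\ the hard half of the statement you are asked to prove. Section \ref{sec: construction operations} only defines the operations and establishes the additivity of $c$ and $\nu$ under them (Lemma \ref{lem: operations preserve cycles}); it does not show that every graph satisfying the forbidden-cycles condition decomposes as $G_1\vident G_2$ or $G_1\veident G_2$. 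That implication is the bulk of the paper's proof: a minimum counterexample is shown to be biconnected, to remain biconnected after deleting any edge and $2$-edge-connected after deleting any vertex, to have at most one multi-neighbour per vertex and $|N(v)|\geq 4$ for all $v$, and then a greedily chosen path closed into a cycle $C$, together with a cycle $C'$ through $v_kv_i$ in $G-E(C)$, produces two edge-disjoint cycles with three common vertices -- a contradiction. Some argument of this kind is indispensable and is entirely missing from your proposal. A smaller inaccuracy: the bijection between cycle decompositions of $G_1\cup G_2$ and of $G_1\veident G_2$ does not need the no-three-common-vertices hypothesis; it holds for arbitrary Eulerian $G_1,G_2$, which is exactly how Lemma \ref{lem: operations preserve cycles} is used for the implication (\ref{itm: G in mathcalG})$\Rightarrow$(\ref{itm: cG = nuG}).
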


We exploit Theorem \ref{thm: main} to develop an algorithm which applies the identification operations backwards.
We can recognize the described graph class in polynomial time.

\begin{noCounterThm}[\ref{thm:cycleunique} - \emph{shortened version}]
	We can decide in time $\mathcal{O}(n(n+m))$ if the number of cycles in a cycle decomposition of a given Eulerian graph is unique.
\end{noCounterThm}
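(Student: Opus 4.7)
The plan is to exploit the constructive characterization behind Theorem~\ref{thm: main}: an Eulerian graph $G$ lies in the target class iff it can be built from Eulerian multiedges by iterated vertex-identifications and vertex-edge-identifications (the operations of Section~\ref{sec: construction operations}). We design a recognition algorithm that runs these operations \emph{in reverse}: starting from $G$, we repeatedly look for a local structure at some vertex that could have arisen from such an identification and undo it. If the process reduces $G$ to a disjoint union of Eulerian multiedges the answer is yes; otherwise it is no.

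First I would verify that each identification operation admits a local inverse detectable at a single vertex $v$. A vertex-identification is reversible at $v$ exactly when the edges incident to $v$ split into two nonempty groups so that replacing $v$ by two copies yields two Eulerian subgraphs still in the class; the case of vertex-edge-identification is analogous, with an extra edge arising in the split. Since every forward identification strictly decreases $|V|+|E|$, each reverse step strictly increases it, bounding the total number of reductions by $\mathcal{O}(n)$.

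The core loop maintains the current graph and scans its vertices in some order; at each vertex $v$ it tests whether a reverse operation applies, using block-cut information and the small-cut structure around $v$. A single full scan can be carried out in $\mathcal{O}(n+m)$ time. As there are at most $\mathcal{O}(n)$ successful reductions before the algorithm either terminates on a union of multiedges (accept) or completes a full scan without finding any applicable reduction (reject), the total running time is $\mathcal{O}(n(n+m))$.

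The main obstacle is correctness of the greedy step. One has to establish two properties: (i) whenever the current graph lies in the class and is not already a disjoint union of multiedges, at least one vertex admits a reverse operation, so the algorithm never gets stuck prematurely; and (ii) the local test at $v$ is equivalent to the existence of a genuine preceding identification, so no invalid reduction is ever performed on a graph outside the class. Both directions should follow from the structural properties of vertex-identification and vertex-edge-identification developed in Section~\ref{sec: construction operations}, combined with a careful analysis of the block-cut tree of~$G$ to locate candidate cut-vertices efficiently.
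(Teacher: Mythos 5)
Your overall plan -- undoing vertex- and vertex-edge-identifications greedily and accepting exactly when the process ends in a disjoint union of Eulerian multiedges -- is the same strategy as the paper's Algorithms~\ref{alg:components} and~\ref{alg:cycleunique}. However, the claimed running time is not established. The crux is your assertion that ``a single full scan can be carried out in $\mathcal{O}(n+m)$ time.'' Testing whether a fixed vertex $v$ admits a reverse vertex-edge-identification means deciding whether some pair $(v,e)$ is a vertex-edge separator, i.e.\ finding a cut-edge of $G-v$; the block-cut tree of $G$ gives you the cut vertices of $G$ (the reverse vertex-identifications) but says nothing about cut-edges of $G-v$, and with standard tools each such test already costs $\Theta(n+m)$. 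So a full scan over all vertices is $\mathcal{O}(n(n+m))$, and with $\mathcal{O}(n)$ reduction steps, each possibly triggering a rescan, your accounting gives $\mathcal{O}(n^2(n+m))$ unless an additional idea is supplied. The paper's missing ingredient is exactly Lemma~\ref{lem:veseparator} (built on Lemma~\ref{lem: separating sets}): if a vertex is not contained in any vertex-edge separator of the current graph, it never becomes contained in one after later separation steps. This monotonicity is what allows each vertex to be tested $\mathcal{O}(1)$ times overall, so that the total work is $\mathcal{O}(n)$ calls of an $\mathcal{O}(n+m)$ test. Without this lemma (or a genuinely linear-time simultaneous test for all vertices, which you do not provide), the bound $\mathcal{O}(n(n+m))$ is not proven.

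Two smaller gaps. First, your bound of $\mathcal{O}(n)$ on the number of reductions does not follow from ``each reverse step strictly increases $|V|+|E|$''; an increasing quantity bounds nothing by itself. The paper instead notes that each separation step raises the number of components by one while every component retains at least two vertices, which yields at most $n-2$ steps. Second, correctness of the greedy requires more than your properties (i) and (ii): on a yes-instance you must also rule out that undoing an \emph{arbitrary} applicable identification leaves a piece outside the class (which would cause a false reject). This confluence does hold, but it needs the additivity of $c$ and $\nu$ under the operations from Lemma~\ref{lem: operations preserve cycles} (as used in the paper's second observation of Section~\ref{sec: algo}): if $c(G)=\nu(G)$ and $G=G_1\veident G_2$, then $c(G_i)=\nu(G_i)$ for both $i$, since $\nu-c$ is nonnegative and adds up. Your write-up should state and use this explicitly rather than appeal generically to ``structural properties'' of Section~\ref{sec: construction operations}.
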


Our main tool for proving the before mentioned results is a novel \emph{constructive characterization}.
A constructive characterization of a graph class is a construction manual for building all graphs in the class starting from some simple set of initial graphs.
Many graph classes can be expressed through constructive characterization, among those are graphs of low  treewidth \cite{Bodlaender98}, $3$-connected and $k$-edge-connected graphs \cite{Frank}.

We may turn the before mentioned statement -- \emph{a graph is Eulerian if and only if it is connected and can be decomposed into cycles} -- into a toy example for a constructive characterization.
We describe the class $\mathcal{E}$ of Eulerian graphs recursively:
\begin{itemize}
	\item If $G$ is isomorphic to $K_1$ or $C_n$ for some $n \in \mathbb{N}, n \geq 1$ then $G \in \mathcal{E}$
	\item If $G_1, G_2 \in \mathcal{E}$ with $E(G_1) \cap E(G_2) = \emptyset$ and $V(G_1) \cap V(G_2) \neq \emptyset$, then also $G_1 \cup G_2 \in \mathcal{E}$.
\end{itemize}
Often constructive characterizations can be exploited to prove a desired statement by induction.
Coming back to the above toy example, we can prove that every Eulerian graph has only vertices of even degree by first observing that each $C_n$ and the $K_1$ have only vertices of even degree and then using the fact that the graph union with disjoint edge sets does not change the even degrees.

We study three basic binary graph operators.
In Section~\ref{sec: construction operations} we define these operators and regard their  behaviour concerning the following graph invariants: connectivity, minimum and maximum number of cycles in a cycle decomposition and treewidth.
Sections~\ref{sec: subquartic} and \ref{sec: nu = c} will then use the introduced operators for constructive characterizations of
Eulerian graphs with maximum degree at most 4 and treewidth at most 2 (Section \ref{sec: subquartic}) and
Eulerian graphs which have the property that the number of cycles in all of its cycle decompositions is the same (Section \ref{sec: nu = c}).
Finally, in Section \ref{sec: algo} we exploit the gained insights to develop a polynomial time algorithm which decides if the cycle number of a given Eulerian graph is unique.

\section{Preliminaries}
We use standard graph terminology, see~\cite{West01, Diestel00, Krumke09}.
Though, we recall some basic notions in the following.
A \emph{graph} $G$ is a triple consisting of a finite non-empty \emph{vertex set} $V(G)$, and a finite \emph{edge set} $E(G)$ and a relation that associates with each edge two different vertices called its \emph{end vertices}.
Observe that this definition excludes loop edges.
If two edges have the same two endvertices we call them \emph{parallel}.
An edge with end vertices $u$ and $v$ is often written as $uv$.
We use this notation even if $G$ has parallel edges between the vertices $u$ and $v$.
This does not lead to any inconvenience as the problems discussed in this article refer to graph invariants which do not depend on the choice of the exact edge between $u$ and $v$.
We denote with $N_G(u)$ the set of all neighbours of $u$ in $G$.
The \emph{degree} $\deg_G(v)$ of $v \in V(G)$ is defined as the number of edges incident to $v$.
If all vertices of $G$ have the same degree $k$, then $G$ is $k$-regular.
We call two graphs~$G$ and $G'$ disjoint if $V(G) \cap V(G') = \emptyset$ and $E(G) \cap E(G') = \emptyset$.
Let $G$ and $G'$ be two graphs with $E(G) \cap E(G') = \emptyset$.
We set $G \cup G'$ to be the graph with $V(G \cup G') = V(G) \cup V(G')$ and $E(G \cup G') = E(G) \cup E(G')$.

Let $u \in V(G)$. We denote with $G-u$ the graph where $u$ and all its incident edges are removed from $G$.
For $F \subseteq E(G)$ we write $G-F$ for the graph with $V(G-F) = V(G)$ and $E(G-F) = E(G)\setminus F$.
If $F = \{f\}$ we write~${G-f}$.
Let $G$ be a graph containing a vertex~$u \in V(G)$ with $\deg_G(u) = 2$ with two distinct neighbours.
\emph{Resolving} $u$ means to remove~$u$ from $G$ and to connect its two neighbours by a new edge.
A \emph{path} $P$ is a graph of the form $V(P) = \{u_0, u_1, \dots, u_k \}$, $E(P) = \{u_0u_1, u_1u_2, \dots, u_{k-1}u_k\}$, where all the $u_i$ are distinct.
We often refer to a path omitting its precise edges but only listing the sequence of its vertices ordered according to their appearance in $P$, say $P = u_0u_1\dots u_k$.
We say that $P$ is a $u_0$-$u_k$-path, the vertices $u_1, \dots, u_{k-1}$ are \emph{internal}\index{internal vertex} vertices of $P$.
Let $P$ be a $u$-$v$-path and $Q$ be a $v$-$w$-path with $V(P) \cap V(Q) = v$.
We set $PQ \coloneqq P \cup Q$.
Even when we study paths as subgraphs of non-simple graphs, this notation does not lead to any inconvenience: In the upcoming topics it is never of any relevance which precise edge a path uses.
If~${P = u_0 \dots u_k}$ is a path, then the graph $C \coloneqq P \cup u_ku_0$ is a \emph{cycle}.
A \emph{cycle decomposition} of a graph $G$ is a set of cycles which are subgraphs in $G$ such that each edge appears in exactly one cycle in the set.
We set
\begin{align*}
c(G) &\coloneqq \min\{|\mathcal{C}| \colon \mathcal{C} \text{ is a cycle decomposition of } G\},\\
\nu(G) &\coloneqq \max\{|\mathcal{C}| \colon \mathcal{C} \text{ is a cycle decomposition of } G\}
\end{align*}
to be the \emph{minimum} respectively \emph{maximum cycle number} of $G$.
A graph is \emph{Eulerian} if it allows for an \emph{Euler tour}, i.e.\ a non-empty alternating sequence $v_0e_0v_1e_1\dots e_{k-1}v_k$ of vertices and edges in $G$ such that $e_i$ has end vertices $v_i$ and $v_{i+1}$ for all $0 \leq i < k$, $v_0 = v_k$ and every edge of $G$ appears exactly once in the sequence.

A graph $G$ is called \emph{connected}\index{connected} if it is non-empty and any two of its vertices are linked by a path in $G$.
The \emph{components}\index{components} of a graph are its maximal (with respect to the subgraph relation) connected subgraphs.
For $V_1, V_2 \subseteq V(G)$ we set $E(V_1, V_2)$ to be the set of all edges with one endvertex in $V_1$ and the other endvertex in $V_2$.
A set $F$ of edges is a \emph{cut}\index{cut} in  $G$ if there exists a partition $\{V_1, V_2\}$ of $V$ such that ${F = E(V_1, V_2)}$.
We call~$F$ a $k$-cut if $|F| = k$.
An element of a~1-cut is called a \emph{cut-edge}.
A connected graph~$G$ is called $k$-edge-connected if it stays connected after the removal of $k-1$ arbitrary edges.
A vertex $v \in V(G)$ is a \emph{cut-vertex} if $G-v$ has more connected components than~$G$.
A connected graph without cut-vertices is called \emph{biconnected}.
The maximal biconnected subgraphs of a graph are called its \emph{biconnected components}.
For a more detailed description of biconnectivity and some basic results we refer to~\cite{West01} and~\cite{Hopcroft1973}.
We say that a set $S \subseteq V(G) \cup E(G)$ \emph{separates} $w_1, w_2 \in V(G)$ if there exists no $w_1$-$w_2$-path in $G$ without elements of $S$.

A connected graph $T$ that does not contain a cycle as a subgraph is a \emph{tree}.
A vertex of degree~1 in $T$ is called a leaf.
For a graph $G$ a \emph{tree-decomposition} $(\mathcal{T},\mathcal{B})$ of $G$ consists of a tree~$\mathcal{T}$ and a set~$\mathcal{B} = \{B_t \colon t \in V(\mathcal{T}) \}$
of \emph{bags} $B_t \subseteq V(G)$ such that
$V(G) = \bigcup\limits_{t \in V(\mathcal{T})} B_t,$
for each edge~${vw \in E(G)}$ there exists a vertex $t \in V(\mathcal{T})$ such that
$v,w \in B_t$, and
if $v \in B_{s} \cap B_{t},$ then~${v \in B_r}$ for each vertex
$r$  on the path connecting~$s$ and $t$ in $\mathcal{T}$.
A tree-decomposition $(\mathcal{T},\mathcal{B})$ has \emph{width} $k$ if each bag has a size of at most $k+1$ and there
exists some bag of size $k+1$.
The \emph{treewidth} of $G$ is the smallest integer~$k$ for which there is a width $k$ tree-decomposition of $G$.
We write $\tw(G) = k$.
A tree-decomposition $(T,\mathcal{B})$ of width $k$ is \emph{smooth} if
$ |B_t| = k+1$ for all~${t \in V(\mathcal{T})}$ and $|B_s \cap B_{t}| = k$ for all $st \in E(\mathcal{T})$.
A graph of treewidth at most $k$ always has a smooth tree-decomposition of width~$k$; see Bodlaender~\cite{Bodlaender98}.

The \emph{contraction}\index{contraction} of an edge $e$ with endpoints $u,v$ is the replacement of $u$ and $v$ with a single vertex whose incident edges are the edges other than $e$ that were incident to $u$ or $v$.
A graph $H$ is a \emph{minor}\index{minor} of a graph $G$ if an isomorphic copy of $H$ can by obtained from $G$ by deleting or contracting edges of $G$.
The graph $H$ obtained by \emph{subdivision} of some edge $uv\in E(G)$ is obtained by replacing the edge $uv$ by a new vertex $w$ and edges $uw$
and $wv$.

\section{Construction operations}
\label{sec: construction operations}
In the following, we introduce three binary graph operations -- vertex-identification, edge-identification and vertex-edge-i\-den\-ti\-fi\-ca\-tion.
The constructive characterizations in Section ~\ref{sec: subquartic} and~\ref{sec: nu = c} each start off by a simple base class of graphs.
In Section~\ref{sec: subquartic} the considered class is then built by mainly using edge-identification.
In Section \ref{sec: nu = c} the vertex-edge-identification is the crucial construction tool.
After defining the above mentioned operations we regard their behaviour concerning cycle decompositions, connectivity and treewidth.

\paragraph{Vertex-identification}
Let $G_1$ and $G_2$ be disjoint graphs and let $u_1 \in V(G_1)$, $u_2 \in V(G_2)$.
We construct the graph $(G_1, u_1) \vident (G_2, u_2)$ by identifying $u_1$ and $u_2$.

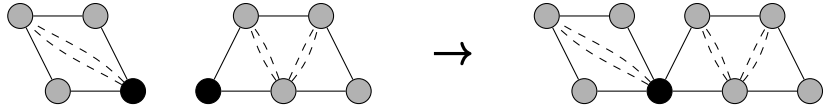
\begin{figure}[h]%
	\centering
	\begin{tikzpicture}[node distance=1cm]
	\node[circle, draw, fill=black] (1) {};
	\node[circle, draw, left of=1, fill=black!30] (2) {};
	\node[circle, draw, above of=2, xshift=-0.5cm, fill=black!30] (3) {};
	\node[circle, draw, right of=3, fill=black!30] (4) {};
	\node[circle, draw, right of=4, fill=black!30] (5) {};
	\node[circle, draw, right of=5, fill=black!30] (6) {};
	\node[circle, draw, below of=6, xshift=0.5cm, fill=black!30] (7) {};
	\node[circle, draw, left of=7, fill=black!30] (8) {};
	\draw[-, dashed] (1) to[out=135, in=-30] (3);
	\draw[-, dashed] (1) to[out=150, in=-45] (3);
	\path[draw] (1) -- (2) -- (3) -- (4);
	\draw[-] (4) -- (1);
	\draw[-] (5) -- (1);
	\path[draw] (5) -- (6) -- (7) -- (8) -- (1);
	\draw[-, dashed] (8) to[out=110, in=-55] (5);
	\draw[-, dashed] (8) to[out=125, in=-70] (5);
	\draw[-, dashed] (8) to[out=50, in=-105] (6);
	\draw[-, dashed] (8) to[out=65, in=-120] (6);
	
	\node[circle, draw,fill=black, left of=1, xshift=-6cm] (10) {};
	\node[circle, draw,fill=black, right of=10] (11) {};
	\node[circle, draw, left of=10, fill=black!30] (12) {};
	\node[circle, draw, above of=12, xshift=-0.5cm, fill=black!30] (13) {};
	\node[circle, draw, right of=13, fill=black!30] (14) {};
	\node[circle, draw, right of=14, xshift=1cm, fill=black!30] (15) {};
	\node[circle, draw, right of=15, fill=black!30] (16) {};
	\node[circle, draw, below of=16, xshift=0.5cm, fill=black!30] (17) {};
	\node[circle, draw, left of=17, fill=black!30] (18) {};
	\draw[-, dashed] (10) to[out=135, in=-30] (13);
	\draw[-, dashed] (10) to[out=150, in=-45] (13);
	\path[draw] (10) -- (12) -- (13) -- (14);
	\path[draw] (15) -- (16) -- (17) -- (18) -- (11);
	\draw[-, dashed] (18) to[out=110, in=-55] (15);
	\draw[-, dashed] (18) to[out=125, in=-70] (15);
	\draw[-, dashed] (18) to[out=50, in=-105] (16);
	\draw[-, dashed] (18) to[out=65, in=-120] (16);
	\draw[-] (10) to (14);
	\draw[-] (11) to (15);
	
	\coordinate[left of=1, xshift=-2cm, yshift=0.5cm] (100);
	\coordinate[right of=100, xshift=-0.5cm] (101);
	\draw[->, very thick] (100) to (101);

	\end{tikzpicture}
	\caption{Vertex-identification of two Eulerian graphs}%
	\label{fig:vertex-identification}%
\end{figure}

\paragraph{Edge-identification}
Let $G_1, G_2$ be disjoint graphs. Further let $e_i \in E(G_i)$ be an edge with endpoints $u_i, v_i$ for $i \in \{1,2\}$.
We construct $(G_1,e_1,u_1) \eident (G_2, e_2,u_2)$ by removing the edge~$e_i$ from $G_i$, $i \in \{1,2\}$ and adding an edge from $u_1$ to $u_2$ and another one from $v_1$ to $v_2$.

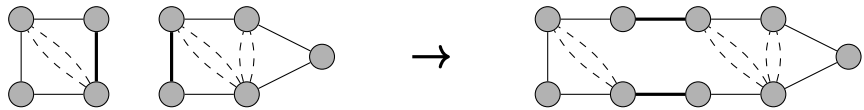
\begin{figure}[h]%
	\centering
	\begin{tikzpicture}[node distance=1cm]
	\node[circle, draw, fill=black!30] (1) {};
	\node[circle, draw, right of=1, fill=black!30] (1b) {};
	\node[circle, draw, left of=1, fill=black!30] (2) {};
	\node[circle, draw, above of=2, fill=black!30] (3) {};
	\node[circle, draw, right of=3, fill=black!30] (4) {};
	\node[circle, draw, right of=4, fill=black!30] (5) {};
	\node[circle, draw, right of=5, fill=black!30] (6) {};
	\node[circle, draw, below of=6, xshift=1cm, yshift=0.5cm, fill=black!30] (7) {};
	\node[circle, draw, left of=7, yshift=-0.5cm, fill=black!30] (8) {};
	\draw[-, dashed] (1) to[out=120, in=-35] (3);
	\draw[-, dashed] (1) to[out=145, in=-60] (3);
	\path[draw] (1) -- (2) -- (3) -- (4);
	\draw[-, very thick] (4) -- (5);
	\path[draw] (5) -- (6) -- (7) -- (8) -- (1b);
	\draw[-, very thick] (1b) -- (1);
	\draw[-, dashed] (8) to[out=120, in=-35] (5);
	\draw[-, dashed] (8) to[out=145, in=-60] (5);
	\draw[-, dashed] (8) to[out=75, in=-75] (6);
	\draw[-, dashed] (8) to[out=105, in=-105] (6);
	
	\coordinate[left of=1, xshift=-1.8cm, yshift=0.5cm] (100);
	\coordinate[right of=100, xshift=-0.5cm] (101);
	\draw[->, very thick] (100) to (101);
	
	\node[circle, draw, left of=1, xshift=-6cm, fill=black!30] (10) {};
	\node[circle, draw, right of=10, fill=black!30] (1b0) {};
	\node[circle, draw, left of=10, fill=black!30] (20) {};
	\node[circle, draw, above of=20, fill=black!30] (30) {};
	\node[circle, draw, right of=30, fill=black!30] (40) {};
	\node[circle, draw, right of=40, fill=black!30] (50) {};
	\node[circle, draw, right of=50, fill=black!30] (60) {};
	\node[circle, draw, below of=60, xshift=1cm, yshift=0.5cm, fill=black!30] (70) {};
	\node[circle, draw, left of=70, yshift=-0.5cm, fill=black!30] (80) {};
	\draw[-, dashed] (10) to[out=120, in=-35] (30);
	\draw[-, dashed] (10) to[out=145, in=-60] (30);
	\path[draw] (10) -- (20) -- (30) -- (40);
	\draw[-, very thick] (10) -- (40);
	\path[draw] (50) -- (60) -- (70) -- (80) -- (1b0);
	\draw[-, very thick] (1b0) -- (50);
	\draw[-, dashed] (80) to[out=120, in=-35] (50);
	\draw[-, dashed] (80) to[out=145, in=-60] (50);
	\draw[-, dashed] (80) to[out=75, in=-75] (60);
	\draw[-, dashed] (80) to[out=105, in=-105] (60);

	\end{tikzpicture}
	\caption{Edge-identification of two Eulerian graphs}%
	\label{fig:Edge-identification}%
\end{figure}

\paragraph{Vertex-edge-identification}
Let $G_1$ and $G_2$ be disjoint graphs and let $e_i$ be an edge in $G_i$ from $u_i$ to $v_i$ for $i \in \{1,2\}$.
We define $(G_1, e_1, u_1) \veident (G_2, e_2, u_2)$ to be the graph where $v_1$ and $v_2$ are identified, the edges $e_1, e_2$ are removed and an edge between $u_1$ and $u_2$ is added.

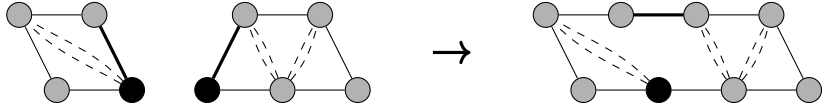
\begin{figure}[h]%
	\centering
	\begin{tikzpicture}[node distance=1cm]
	\node[circle, draw, fill=black] (1) {};
	\node[circle, draw, left of=1, fill=black!30] (2) {};
	\node[circle, draw, above of=2, xshift=-0.5cm, fill=black!30] (3) {};
	\node[circle, draw, right of=3, fill=black!30] (4) {};
	\node[circle, draw, right of=4, fill=black!30] (5) {};
	\node[circle, draw, right of=5, fill=black!30] (6) {};
	\node[circle, draw, below of=6, xshift=0.5cm, fill=black!30] (7) {};
	\node[circle, draw, left of=7, fill=black!30] (8) {};
	\draw[-, dashed] (1) to[out=135, in=-30] (3);
	\draw[-, dashed] (1) to[out=150, in=-45] (3);
	\path[draw] (1) -- (2) -- (3) -- (4);
	\draw[-, very thick] (4) -- (5);
	\path[draw] (5) -- (6) -- (7) -- (8) -- (1);
	\draw[-, dashed] (8) to[out=110, in=-55] (5);
	\draw[-, dashed] (8) to[out=125, in=-70] (5);
	\draw[-, dashed] (8) to[out=50, in=-105] (6);
	\draw[-, dashed] (8) to[out=65, in=-120] (6);
	
	\node[circle, draw,fill=black, left of=1, xshift=-6cm] (10) {};
	\node[circle, draw,fill=black, right of=10] (11) {};
	\node[circle, draw, left of=10, fill=black!30] (12) {};
	\node[circle, draw, above of=12, xshift=-0.5cm, fill=black!30] (13) {};
	\node[circle, draw, right of=13, fill=black!30] (14) {};
	\node[circle, draw, right of=14, xshift=1cm, fill=black!30] (15) {};
	\node[circle, draw, right of=15, fill=black!30] (16) {};
	\node[circle, draw, below of=16, xshift=0.5cm, fill=black!30] (17) {};
	\node[circle, draw, left of=17, fill=black!30] (18) {};
	\draw[-, dashed] (10) to[out=135, in=-30] (13);
	\draw[-, dashed] (10) to[out=150, in=-45] (13);
	\path[draw] (10) -- (12) -- (13) -- (14);
	\path[draw] (15) -- (16) -- (17) -- (18) -- (11);
	\draw[-, dashed] (18) to[out=110, in=-55] (15);
	\draw[-, dashed] (18) to[out=125, in=-70] (15);
	\draw[-, dashed] (18) to[out=50, in=-105] (16);
	\draw[-, dashed] (18) to[out=65, in=-120] (16);
	\draw[-, very thick] (10) to (14);
	\draw[-, very thick] (11) to (15);
	
	\coordinate[left of=1, xshift=-2cm, yshift=0.5cm] (100);
	\coordinate[right of=100, xshift=-0.5cm] (101);
	\draw[->, very thick] (100) to (101);

	\end{tikzpicture}
	\caption{Vertex-edge-identification of two Eulerian graphs}%
	\label{fig:vertex-edge-identification}%
\end{figure}

If $e_i$ and $u_i$ are clear from the context or the statement is independent from the choice of $e_i$ and~$u_i$ then we simply write $G_1 \vident G_2$, $G_1 \eident G_2$ and $G_1 \veident G_2$.

\paragraph{Cycles invariants are compatible with the identification operations}
In the following we demonstrate that the identification operations preserve the cycle behaviour in a natural way.
We just keep all cycles whose edges are untouched by the operation (in the case of vertex identification these are all cycles).
In each of $G_1$ and $G_2$ exactly one edge is deleted in the construction of $G_1 \eident G_2$ respectively $G_1 \veident G_2$.
We obtain a cycle in $G_1 \eident G_2$ respectively $G_1 \veident G_2$ which uses the edges not contained in $E(G_1)$ nor in~$E(G_2)$ by combining a cycle from $G_1$ with a cycle from $G_2$ each containing a deleted edge.

\begin{lemma}[Cycle invariants under construction operations]
	\label{lem: operations preserve cycles}
	Let $G_1$ and $G_2$ be Eulerian graphs.
	\begin{enumerate}[(i)]
		\item The invariants $c$ and $\nu$ show the following behaviour under vertex-identification:
		\begin{align*}
		c(G_1\vident G_2) &= c(G_1) + c(G_2),\\
		\nu(G_1\vident G_2) &= \nu(G_1) + \nu(G_2).
		\end{align*}
		\item They behave in the following way under edge-identification and vertex-edge-identification:
		\begin{align*}
		c(G_1\eident G_2) = c(G_1\veident G_2) &= c(G_1) + c(G_2) -1 ,\\
		\nu(G_1\eident G_2) = \nu(G_1\veident G_2) &= \nu(G_1) + \nu(G_2) -1.
		\end{align*}
	\end{enumerate}
\end{lemma}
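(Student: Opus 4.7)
The plan is to establish each identity by exhibiting an explicit bijection (or near-bijection) between cycle decompositions of the combined graph and pairs of cycle decompositions of $G_1$ and $G_2$. All four formulas then follow at once by taking minima and maxima on both sides.

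For part (i), the key structural observation is that after identifying $u_1$ with $u_2$ to a single vertex $u$, the vertex $u$ is a cut-vertex of $G_1 \vident G_2$ separating $V(G_1)\setminus\{u_1\}$ from $V(G_2)\setminus\{u_2\}$. Since a cycle passes through any vertex it meets along exactly two incident edges, a cycle through $u$ must use either two edges of $G_1$ or two edges of $G_2$ at $u$ — otherwise the cycle would have to re-enter $u$ a second time, contradicting that cycles do not repeat vertices. Hence every cycle of $G_1\vident G_2$ is contained in $G_1$ or in $G_2$. This gives the bijection: cycle decompositions of $G_1\vident G_2$ correspond exactly to pairs $(\mathcal{C}_1,\mathcal{C}_2)$ where $\mathcal{C}_i$ is a cycle decomposition of $G_i$, and $|\mathcal{C}_1\cup\mathcal{C}_2|=|\mathcal{C}_1|+|\mathcal{C}_2|$, proving the two formulas.

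For part (ii), write $e_i = u_iv_i$ and let $f_1,f_2$ denote the newly added edges in the edge-identification case ($f_1=u_1u_2$, $f_2=v_1v_2$), and $f=u_1u_2$ together with the identified vertex $v=v_1=v_2$ in the vertex-edge-identification case. In both cases the only edges between $V(G_1)\setminus\{v_1\}$ and $V(G_2)\setminus\{v_2\}$ are the new ones, so by the same cut-vertex/cut-pair reasoning as above, every cycle in $G_1\eident G_2$ that uses $f_1$ must also use $f_2$, and in $G_1\veident G_2$ every cycle through $f$ must also pass through the identified vertex $v$. In particular, in any cycle decomposition $\mathcal{C}$ of the combined graph there is a unique ``mixed'' cycle $C^\star$ meeting both sides; it decomposes as a $v_1$--$u_1$-path $P_1$ in $G_1-e_1$ plus a $v_2$--$u_2$-path $P_2$ in $G_2-e_2$ plus the new edge(s). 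Replacing $P_1\cup P_2$ by $P_1\cup e_1$ and $P_2\cup e_2$ yields a cycle $C_1$ through $e_1$ in $G_1$ and a cycle $C_2$ through $e_2$ in $G_2$; the remaining cycles of $\mathcal{C}$ stay untouched and, as in (i), lie entirely in $G_1$ or entirely in $G_2$. This produces a cycle decomposition $\mathcal{C}_1$ of $G_1$ (with $C_1\in\mathcal{C}_1$) and $\mathcal{C}_2$ of $G_2$ (with $C_2\in\mathcal{C}_2$) such that $|\mathcal{C}_1|+|\mathcal{C}_2|=|\mathcal{C}|+1$. Conversely, given $\mathcal{C}_1,\mathcal{C}_2$, pick the unique cycles $C_1\in\mathcal{C}_1$, $C_2\in\mathcal{C}_2$ through $e_1,e_2$ and glue them along the new edges to build $C^\star$; this operation is inverse to the previous one. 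Again taking minima and maxima yields the stated formulas.

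The only real obstacle is the structural claim that any cycle meeting both ``sides'' of the identification in the edge- and vertex-edge-case actually uses both of the boundary objects, and is uniquely determined in the decomposition. This is handled in a uniform way by observing that after removing the new edge(s) from the combined graph we are left with the disjoint union (respectively vertex-identification) of $G_1-e_1$ and $G_2-e_2$, so connectivity between the two sides is carried solely by the boundary objects; the parity/degree constraints of an Eulerian graph ensure the remaining arguments go through without case distinctions on whether $u_i$ or $v_i$ coincide with other vertices of interest.
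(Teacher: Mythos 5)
Your proof is correct and follows essentially the same route as the paper: a bijection between cycle decompositions of $G_1\circ G_2$ and pairs of decompositions of $G_1$ and $G_2$, keeping untouched cycles and splitting/merging the unique cycle(s) through $e_1,e_2$ and the new edge(s). You merely make explicit the cut-vertex/2-cut argument that the paper leaves implicit when asserting the one-to-one correspondence.
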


\begin{proof}\hfill
	\begin{enumerate}[(i)]
		\item 	For $i \in \{1,2\}$ let $v_i \in V(G_i)$ such that $G_1 \vident G_2 = (G_1, v_1) \vident (G_2, v_2)$.
		The vertex which arises from the identification of $v_1\in V(G_1)$ and $v_2\in V(G_2)$ is a cut-vertex.
		Thus, we obtain a one-to-one-correspondence of cycle decompositions in~$G_1 \cup G_2$ and cycle decompositions in $G_1 \vident G_2$ just by relabelling $v_1$ and $v_2$ to~$v$ and adjusting the incident edges.
		Altogether we obtain 
		\[c(G_1 \vident G_2) = c(G_1) + c(G_2)~\text{and}~\nu(G_1 \vident G_2) = \nu(G_1) + \nu(G_2).\]
		\item Let $G_1 \eident G_2 = (G_1, e_1, u_1) \eident (G_2, e_2, u_2)$ for suitable $e_i \in E(G_i)$ and $u_i \in V(G_i)$, $i \in \{1,2\}$.
		In a cycle decomposition of $G_i$ there is exactly one cycle~$C_i$ containing the edge  $e_i$ with end vertices $u_i$ and $v_i$.
		We obtain a one-to-one-correspondence of cycle decompositions in $G_1 \cup G_2$ and cycle decompositions in~$G_1 \eident G_2$ by keeping all cycles from the decompositions of $G_1$ and $G_2$ except $C_1$ and $C_2$ and adding the cycle $C$ with $E(C) = (E(C_1) \cup E(C_2)\setminus \{u_1v_1, u_2v_2\}) \cup \{u_1u_2, v_1v_2\}$, see Figure \ref{fig:Edge-identification}.
		Thus,
		\begin{align*}
		c(G_1 \eident G_2) &= c(G_1 \cup G_2) - 1 = c(G_1) + c(G_2) - 1~\text{and}\\
		\nu(G_1 \eident G_2) &= \nu(G_1 \cup G_2) - 1 = \nu(G_1) + \nu(G_2) - 1.
		\end{align*}

		Now let $G_1 \veident G_2 = (G_1, e_1, u_1) \veident (G_2, e_2, u_2)$ for suitable $e_i \in E(G_i)$, $u_i \in V(G_i)$, $i \in \{1,2\}$.
		Analogously to the previous operation, we obtain a one-to-one correspondence between cycle decompositions of $G_1 \cup G_2$ and $G_1 \veident G_2$ by choosing $C$ with $E(C) = (E(C_1) \cup E(C_2) \cup \{u_1u_2\}) \setminus \{e_1,e_2\}$, see Figure~\ref{fig:vertex-edge-identification}.
		Consequently we obtain the same relations as above:
		\[{c(G_1 \veident G_2)} = c(G_1) + c(G_2) - 1~\text{and}~\nu(G_1 \veident G_2) = \nu(G_1) + \nu(G_2) - 1.\]	\qedhere	
	\end{enumerate}
\end{proof}

\begin{corollary}
	Let $G_1$, $G_2$ be Eulerian graphs. 
	If $G=G_1\circ G_2$ for some~${\circ\in\{\vident,\eident,\veident\}}$ then it holds true that
	\begin{align*}
	\nu(G)-c(G)=\left(\nu(G_1)-c(G_1)\right) + \left(\nu(G_2) - c(G_2)\right).
	\end{align*}
\end{corollary}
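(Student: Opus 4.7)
The plan is to proceed by case analysis on the operator $\circ \in \{\vident, \eident, \veident\}$ and in each case simply substitute the formulas provided by Lemma~\ref{lem: operations preserve cycles}. This corollary is essentially a bookkeeping exercise: the constant offsets for $\nu$ and $c$ match exactly, so they cancel in the difference $\nu - c$.

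More concretely, if $\circ = \vident$, then part~(i) of Lemma~\ref{lem: operations preserve cycles} gives $\nu(G) = \nu(G_1) + \nu(G_2)$ and $c(G) = c(G_1) + c(G_2)$, and subtracting the second from the first yields the claim. If $\circ \in \{\eident, \veident\}$, then part~(ii) tells us that $\nu(G) = \nu(G_1) + \nu(G_2) - 1$ and $c(G) = c(G_1) + c(G_2) - 1$; the two $-1$ terms cancel upon subtraction, so again we obtain
\begin{align*}
\nu(G) - c(G) = \bigl(\nu(G_1) - c(G_1)\bigr) + \bigl(\nu(G_2) - c(G_2)\bigr).
\end{align*}

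Since all work has already been done in Lemma~\ref{lem: operations preserve cycles}, there is no real obstacle here. The statement is best presented as an immediate corollary with a one- or two-line proof covering both parities of $\circ$ simultaneously, perhaps by writing $\nu(G) = \nu(G_1) + \nu(G_2) - \varepsilon$ and $c(G) = c(G_1) + c(G_2) - \varepsilon$ with $\varepsilon \in \{0,1\}$ according to the operation.
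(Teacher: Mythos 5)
Your proof is correct and matches the paper's intent exactly: the corollary is stated without proof as an immediate consequence of Lemma~\ref{lem: operations preserve cycles}, and your substitution-and-cancellation argument (with the uniform $\varepsilon\in\{0,1\}$ bookkeeping) is precisely the computation the paper leaves to the reader.
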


\medskip

\paragraph{Connectivity is compatible with the identification operations}
We show in Lemma \ref{lem: separating sets} that the behaviour of paths between two given vertices in $G_1$ is preserved in $G_1 \veident G_2$.
We follow the intuition to keep all paths which do not contain the edge of $G_1$ which is deleted in $G_1 \veident G_2$ and to reroute a path which uses the deleted edge along a path in $G_2$. We translate the results to the construction $G_1 \eident G_2$.
We start off by the observation that cut-edges are preserved under vertex-edge-identification.

\begin{observation}
	\label{obs: veident cutedges}
	Let $G_1$ be a graph containing a cut-edge and let $G_2$ be some other graph.
	Then, also $G_1 \veident G_2$ contains a cut-edge.
\end{observation}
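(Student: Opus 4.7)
The plan is a short case analysis based on whether the cut-edge $f$ of $G_1$ coincides with the edge $e_1$ used in the operation $(G_1,e_1,u_1)\veident(G_2,e_2,u_2)$. Let $A,B$ denote the two components of $G_1-f$. Recall that $G_1\veident G_2$ is obtained by deleting $e_1$ and $e_2$, identifying $v_1$ with $v_2$ into a vertex $v$, and adding a new edge $u_1u_2$.

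In the case $f\neq e_1$, the edge $f$ still exists in $G_1\veident G_2$, and since $e_1\in E(A)\cup E(B)$, both endpoints $u_1,v_1$ lie in the same component of $G_1-f$; say $u_1,v_1\in V(A)$. Then the identified vertex $v$ lies on $A$'s side, and the new edge $u_1u_2$ joins $u_1\in V(A)$ to $u_2\notin V(B)$. Therefore no edge of $G_1\veident G_2-f$ has exactly one endpoint in $V(B)$, so $V(B)$ (which contains the $B$-endpoint of $f$) is separated from the rest; hence $f$ is again a cut-edge.

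In the remaining case $f=e_1$, the two endpoints of $e_1$ lie in different components; say $u_1\in V(A)$ and $v_1\in V(B)$. The edges of $G_1\veident G_2$ are $E(A)\cup E(B)\cup(E(G_2)-\{e_2\})\cup\{u_1u_2\}$, and the identified vertex $v=v_1=v_2$ lies with $V(B)$ (together with the rest of $V(G_2)$). Deleting $u_1u_2$ therefore leaves no edge with one endpoint in $V(A)$ and the other outside $V(A)$, so $V(A)$ is separated from the rest; thus $u_1u_2$ is the desired cut-edge.

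There is no serious obstacle here: the only thing one has to be careful about is tracking into which of the two components of $G_1-f$ each of $u_1,v_1$ is placed, and correspondingly on which side the identified vertex $v$ and the new edge $u_1u_2$ land. Once that bookkeeping is fixed by the case distinction, the claim drops out because none of the newly introduced edges crosses the partition $(V(A),V(B))$ except possibly through $f$ or $u_1u_2$.
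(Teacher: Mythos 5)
Your proof is correct and follows essentially the same route as the paper's: a case distinction on whether the cut-edge of $G_1$ equals $e_1$, concluding that either the old cut-edge survives or the new edge $u_1u_2$ becomes a cut-edge; you merely spell out the bookkeeping that the paper leaves implicit.
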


\begin{proof}
	Let $e_i \in E(G_i)$ and $u_i \in V(G_i)$  for $i \in \{1,2\}$ such that $G_1 \veident G_2 = (G_1, e_1, u_1) \veident (G_2, e_2, u_2)$.
	Let $e' \in E(G_1)$ be a cut-edge.
	If $e' \neq e_1$ then $e'$ is still a cut-edge in $G_1 \veident G_2$.
	Otherwise, the new edge connecting $u_1$ and $u_2$ is a cut-edge in~$G_1 \veident G_2$.
\end{proof}

\begin{lemma}
	\label{lem: separating sets}
	Let $G_1, G_2$ be 2-edge-connected graphs with edges $e_i=v_iu_i\in E(G_i)$ for $i\in \{1,2\}$.
	Let $e$ be the edge in $(G_1,e_1, u_1)  \veident (G_2, e_2, u_2)$ with end vertices~$u_1$ and $u_2$ 
	and let $v$ be the vertex arising from the identification of $v_1$ and $v_2$.
	Let $S \subseteq V(G_1) \cup E(G_1)$. Further set
	\begin{align*}
	S' \coloneqq \begin{cases}
	\phantom{(}S &\text{if~} v_1, e_1 \notin S, \\
	(S\setminus \{e_1\}) \cup \{e\} &\text{if~} e_1 \in S, v_1 \notin S, \\
	(S\setminus \{v_1\}) \cup \{v\} &\text{if~} v_1 \in S, e_1 \notin S,  \\
	(S\setminus \{e_1, v_1\}) \cup \{e, v\} &\text{if~} v_1, e_1 \in S.
	\end{cases}
	\end{align*}
	Let $w_1, w_2 \in V(G_1)$ be two distinct vertices.
	We may assume $w_1 \neq v_1$.
	Set
	\begin{align*}
	w_2' \coloneqq \begin{cases}
	w_2 &\text{if~} w_2 \neq v_1, \\
	v &\text{if~} w_2 = v_1.
	\end{cases}
	\end{align*} 
	The set $S$ separates $w_1$ and $w_2$ in $G_1$ if and only if $S'$ separates $w_1$ and $w_2'$ in $G_1 \veident G_2$.
\end{lemma}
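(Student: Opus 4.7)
The plan is to establish a correspondence between $w_1$-$w_2$-paths in $G_1-S$ and $w_1$-$w_2'$-paths in $G\coloneqq (G_1,e_1,u_1)\veident(G_2,e_2,u_2)$ after replacing $S$ by $S'$. Since a set separates two vertices precisely when it meets every path between them, such a correspondence yields both implications of the ``if and only if'' at once. I would prove each direction by explicitly converting a path on the source side into one on the target side.

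For the direction ``$G_1$-path $\Rightarrow$ $G$-path'', given a $w_1$-$w_2$-path $Q$ in $G_1-S$, I split on whether $Q$ uses $e_1$. If not, then the same vertex--edge sequence with every occurrence of $v_1$ renamed to $v$ is already a valid $w_1$-$w_2'$-path in $G$, and $S'$-avoidance follows directly from the four cases in the definition of $S'$ together with $Q\cap S=\emptyset$. If $Q$ does use $e_1$, then $e_1\notin S$ and $v_1\notin S$; I would replace the single edge $e_1$ by a detour through $G_2$, namely the concatenation of a $v_2$-$u_2$-path $R$ in $G_2-e_2$ (which exists because $G_2$ is $2$-edge-connected) with the new edge $e=u_1u_2$. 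The result is a simple $w_1$-$w_2'$-path in $G$, since the internal vertices of $R$ lie in $V(G_2)\setminus\{v_2\}$ and are disjoint from the rest of $Q$, and it avoids $S'$ because $S'\subseteq V(G_1)\cup\{v\}\cup E(G_1)\cup\{e\}$ does not meet $E(R)$ or the internal vertices of $R$.

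For the converse direction, given a $w_1$-$w_2'$-path $P$ in $G-S'$, I would analyse its intersection with $V(G_2)\setminus\{v_2\}$. Since $P$ is simple, it traverses $v$ at most once and uses the edge $e$ at most once, so the vertices of $P$ lying in $V(G_2)\setminus\{v_2\}$ form either the empty set or a single contiguous subpath whose endpoints are $v$ and $u_2$, with $u_2$ reached from $u_1$ via $e$. In the empty case, renaming $v$ back to $v_1$ yields the required path in $G_1-S$ directly. In the other case, I excise the entire $G_2$-portion together with the edge $e$ and substitute the single edge $e_1=v_1u_1$ of $G_1$; the hypotheses $v\notin S'$ and $e\notin S'$ force $v_1\notin S$ and $e_1\notin S$, so the resulting path really lies in $G_1-S$.

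The main obstacle is the structural analysis of $P$ in the converse direction: one must argue that simplicity of $P$ forces the $G_2$-portion to be a single $v_2$-$u_2$-subpath entered and exited through the only two ``portals'' $v$ and $e$, after which the substitution by $e_1$ is essentially forced. The remaining verifications -- matching the four cases of $S'$ to the positions of $v_1$ and $e_1$ relative to $S$, and accounting for the single boundary case $w_2=v_1$ (where $w_2'=v$) -- are then routine bookkeeping.
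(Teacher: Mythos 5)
Your proposal is correct and follows essentially the same route as the paper's proof: a two-way path correspondence, rerouting a path through $e_1$ via a $u_2$-$v_2$-path in $G_2-e_2$ (guaranteed by $2$-edge-connectivity) in one direction, and in the other direction using simplicity of the path to isolate a single $G_2$-detour between the two portals $v$ and $e$, which is then replaced by $e_1$. Your structural justification of the converse direction is in fact slightly more explicit than the paper's, which simply asserts the form $P'=P_1'u_1eu_2P_2'P_3'$.
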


\begin{proof}
	If suffices to show: $G_1$ contains a $w_1$-$w_2$-path without elements from $S$ if and only if $G_1 \veident G_2$ contains a $w_1$-$w_2'$-path without elements from $S'$.
	
	\medskip
	\noindent
	Let $P$ be a $w_1$-$w_2$-path in $G_1$ with $(V(P) \cup E(P)) \cap S = \emptyset$.
	Assume that $P$ does not contain $v_1$ and~$e_1$, then $w_2' = w_2$ and $P$ is a $w_1$-$w_2'$-path in $G_1\veident G_2$ with $(V(P) \cup E(P)) \cap S' = \emptyset$.
	Now assume that $P$ contains $v_1$ but not $e_1$. 
	We obtain a $w_1$-$w_2'$-path~$P'$ with $(V(P) \cup E(P)) \cap S' = \emptyset$ by renaming~$v_1$ to $v$ in $P$.
	Last assume that~$P$ contains $e_1$. Then, $P$ is of the form $P = P_1u_1e_1v_1P_2$ where $P_1$ is a $w_1$-$u_1$-path (resp.\ $w_2$-$u_1$-path) and $P_2$ is a $v_1$-$w_2$-path (resp.\ $v_1$-$w_1$-path) in $G_1$.
	From the 2-edge-connectivity of $G_2$, we obtain that there exists a $u_2$-$v_2$-path $Q$ in $G_2-e_2$.
	Let $Q'$ be the path obtained from $Q$ by renaming $v_2$ to $v$ and let $P_2'$ be the path in $G_1 \veident G_2$ obtained from $P_2$ by renaming $v_1$ to $v$.
	Now $P_1u_1eu_2Q'P_2'$ is a  $w_1$-$w_2$-path in $G_1 \veident G_2$ with
	$(V(P_1Q'P_2') \cup E(P_1Q'P_2')) \cap S' = \emptyset$.
	
	\medskip
	\noindent
	Let now $P'$ be a $w_1$-$w_2^\prime$-path in $G_1 \veident G_2$ with $(V(P') \cup E(P')) \cap S' = \emptyset$.
	If $V(P^\prime)\subseteq V(G_1)\cup\{v\}$ then the path obtained from $P^\prime$ by renaming $v$ to $v_1$ (if it is contained in~$P^\prime$) is a $w_1$-$w_2$-path in $G_1$.
	Otherwise $P'$ must be of the form $P' = P_1'u_1eu_2P_2'P_3'$, where $P_1'$ is a $w_1$-$u_1$-path (resp.\ $w_2$-$u_1$-path) with edges in $E(G_1)\setminus\{e_1\}$, $P_2'$ is a $u_2$-$v$-path with edges in $E(G_2) \setminus \{e_2\}$ and $P_3'$ is a $v$-$w_2$-path (resp.\ $v$-$w_1$-path) with edges in $E(G_1)\setminus\{e_1\}$.
	Let $P_3$ be the path in $G_1$ that arises from $P_3'$ by renaming $v$ to $v_1$.
	We obtain $(V(P_1'u_1e_1v_1P_3) \cup E(P_1'u_1e_1v_1P_3)) \cap S = \emptyset$ and $P_1^\prime u_1e_1v_1P_3$ is a $w_1$-$w_2$-path in $G_1$.
\end{proof}

\begin{corollary}
	\label{coro: biconnectivity}
	Let $G_1$ and $G_2$ be graphs with edges $e_i=v_iu_i\in E(G_i)$ for $i=1,2$.
	Let $e$ be the edge in $(G_1,e_1, u_1)  \veident (G_2, e_2, u_2)$ with end vertices $u_1$ and $u_2$,
	let $v$ be the vertex arising from the identification of $v_1$ and $v_2$.
	It holds that $G_1 \veident G_2$ is biconnected if and only if $G_1$ and $G_2$ are biconnected and contain more than one edge.
\end{corollary}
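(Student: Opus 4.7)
The plan is to leverage Observation~\ref{obs: veident cutedges} and Lemma~\ref{lem: separating sets} -- which already encode how cut-edges and vertex-separators transfer between $G_1$ and $H := G_1 \veident G_2$ -- together with a direct connectivity argument in the non-trivial case. The forward direction is short: since $v$, $u_1$, $u_2$ are three distinct vertices of $H$, biconnectivity of $H$ forces $2$-edge-connectivity and hence the absence of cut-edges in $H$. Observation~\ref{obs: veident cutedges}, applied in each ordering of $G_1, G_2$ (the operation $\veident$ is symmetric), then rules out cut-edges in both $G_i$, so each $G_i$ is $2$-edge-connected and in particular has at least two edges. A cut-vertex $w$ of $G_1$ would separate some pair $w_1, w_2 \in V(G_1) \setminus \{w\}$; after swapping so that $w_1 \neq v_1$, Lemma~\ref{lem: separating sets} applied with $S = \{w\}$ produces a singleton $S'$ (namely $\{w\}$ or $\{v\}$) that separates $w_1$ from $w_2'$ in $H$, contradicting biconnectivity. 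The argument for $G_2$ is analogous.

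For the reverse direction I would assume each $G_i$ biconnected with at least two edges -- hence both $2$-edge-connected, since a cut-edge in a biconnected graph with more than one edge would force a cut-vertex. Writing $V_i := (V(G_i) \setminus \{v_i\}) \cup \{v\}$, the subgraphs induced on $V_1$ and $V_2$ in $H$ are connected (as each $G_i - e_i$ is) and share the vertex $v$, so $H$ is connected. For the cut-vertex part I would case-split on a candidate $w \in V(H)$. If $w = v$, then $H - v$ is the vertex-disjoint union of $G_1 - v_1$ and $G_2 - v_2$ joined by the edge $e = u_1 u_2$, and each $G_i - v_i$ is connected by biconnectivity. If $w \in V_1 \setminus \{v\}$ (with $w \in V_2 \setminus \{v\}$ symmetric), I would show that every other vertex of $H - w$ reaches $v$: for $z \in V_2$, any $v_2$--$z$-path in $G_2 - e_2$ works since $w \notin V_2$; for $z \in V_1 \setminus \{v, w\}$, take a $v_1$--$z$-path $P$ in $G_1 - w$ via biconnectivity, and if $P$ uses the deleted edge $e_1$, reroute that step through the detour formed by a $v_2$--$u_2$-path in $G_2 - e_2$ followed by the new edge $e = u_1 u_2$, before continuing along the suffix of $P$ from $u_1$ to $z$.

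The delicate step is the last rerouting: after $e_1$ is removed from $H$, a path in $G_1 - w$ that traversed $e_1$ must be patched via $G_2$, and the detour must avoid $w$. This works because $w$ sits on the $V_1$-side, while the detour visits only vertices in $V_2 \cup \{u_1\}$; and $u_1 \neq w$ is forced by the fact that $u_1$ already appears on the original path $P$ inside $G_1 - w$. A small observation that makes the case analysis clean is that when a simple path in $G_1$ starts at $v_1$ and uses $e_1$, it must use $e_1$ as its \emph{first} edge, so only the initial leg needs rerouting. Everything beyond this is routine bookkeeping against the two already-proved tools.
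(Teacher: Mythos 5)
Your argument is correct. The forward direction is essentially the paper's: you pass to the contrapositive data via Observation~\ref{obs: veident cutedges} (cut-edges of a factor survive, ruling out a single-edge factor or a non-$2$-edge-connected factor) and via Lemma~\ref{lem: separating sets} with a singleton $S=\{w\}$ (a cut-vertex of a factor transfers to $G_1\veident G_2$), which is exactly how the paper handles that implication. The converse is where you diverge: the paper assumes both factors biconnected, invokes Menger's theorem to build two internally disjoint $w$-$x$-paths for cross pairs ($P_1u_1eu_2P_2$ and $Q_1'Q_2'$), and delegates same-side pairs back to Lemma~\ref{lem: separating sets} plus Menger; you instead verify biconnectivity directly from the definition, checking that $H-w$ stays connected for every $w$, with the only nontrivial step being the rerouting of a $v_1$-$z$-path of $G_1-w$ through $G_2-e_2$ and the new edge $u_1u_2$ when it used the deleted edge $e_1$ (and your observation that a simple path from $v_1$ using $e_1$ must use it as its first edge, plus $u_1\neq w$, makes the patching clean). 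Your route is more elementary and self-contained -- it avoids Menger entirely and in effect re-proves the relevant piece of the path-transfer argument inside the case analysis -- at the cost of some bookkeeping; the paper's route is shorter because it reuses Lemma~\ref{lem: separating sets} and Menger wholesale. Both treatments leave the trivial connectivity of the factors (needed to pass from ``no cut-edge'' to ``$2$-edge-connected'') implicit, so that is not a point against you.
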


\begin{proof}
	Assume that $G_1$ and $G_2$ are both biconnected and each contain more than one edge.
	Let $w \in V(G_1)$ and $x \in V(G_2)$.
	Then, by Menger's Theorem (see \cite{West01}) there exist internally vertex disjoint paths $P_1$ from $w$ to $u_1$ and $Q_1$ from $w$ to $v_1$ in $G_1$. Further, there exist internally vertex disjoint paths $P_2$ from $u_2$ to $x$ and $Q_2$ from $v_2$ to~$x$ in $G_2$.
	Let for $i\in\{1,2\}$ $Q_i'$ be the path that arises from $Q_i$ by renaming $v_i$ to $v$. Now $P_1u_1eu_2P_2$ and $Q_1'Q_2'$ are two internally vertex disjoint $w$-$x$-paths in $G_1 \veident G_2$.
	For $i\in\{1,2\}$ and two vertices $w_1$ and $w_2$  in $V(G_i)\setminus\{v_i\}$ we obtain from Lemma \ref{lem: separating sets} and Menger's Theorem that there exists two internally vertex disjoint paths in~$G_1 \veident G_2$ connecting $w_1$ and $w_2$.
	
	If $G_i$ for some $i \in \{1,2\}$ contains just one edge, then $G_1 \veident G_2$ contains a cut vertex.
	Next suppose that $G_i$ has a cut-edge for some $i \in \{1,2\}$.
	By Observation \ref{obs: veident cutedges} also $G_1 \veident G_2$ has a cut-edge.
	Last suppose that $G_1$ and $G_2$ are 2-edge-connected and~$G_i$ has a cut-vertex for some~$i\in\{1,2\}$.
	But then by Lemma \ref{lem: separating sets} also $G_1 \veident G_2$ has a cut-vertex.
	This settles the claim.
\end{proof}

\begin{lemma}
	\label{lem: connectivity and edge identification}
	Let $G_1$ and $G_2$ be 2-edge connected graphs and let $e_i \in E(G_i)$ with end vertices $u_i$ and $v_i$ for $i \in \{1,2\}$.
	Let $S \subseteq V(G_1) \cup E(G_1)$.
	Let $w_1, w_2 \in V(G_1)$ be two distinct vertices.
	Set
	\begin{align*}
	S' \coloneqq \begin{cases}
	S~&\text{if}~e_1 \notin S,\\
	(S \setminus \{e_1\}) \cup \{u_1u_2\}~&\text{if}~e_1 \in S.
	\end{cases}
	\end{align*}
	Then, $S$ separates the vertices $w_1$ and $w_2$ in $G_1$ if and only if $S'$ separates $w_1$ and $w_2$ in $(G_1, e_1, u_1) \eident (G_2, e_2, u_2)$.
	In particular, $G_1 \eident G_2$ is biconnected if and only if $G_1$ and $G_2$ are biconnected.
\end{lemma}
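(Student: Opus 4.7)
The plan is to prove the separation statement by mimicking the bijective path-transplant argument of Lemma \ref{lem: separating sets}, and then to deduce the biconnectivity addendum from it with a small case analysis. The only structural difference compared with $\veident$ is that $\eident$ deletes $e_1$ from $G_1$ and introduces \emph{two} cross edges $u_1u_2$ and $v_1v_2$; consequently, any $w_1$-$w_2$-path of $G_1\eident G_2$ whose endpoints lie in $V(G_1)$ either stays entirely inside $V(G_1)$ or enters $V(G_2)$ through one cross edge and leaves through the other.

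For the ``only if'' part of the separation equivalence I would take a $w_1$-$w_2$-path $P$ in $G_1$ avoiding $S$ and split on whether $P$ traverses $e_1$. If it does not, then $P$ already lives in $G_1\eident G_2$, and since the only possible element of $S'\setminus S$ is $u_1u_2\notin E(P)$, it still avoids $S'$. If $P$ uses $e_1$, then $e_1\notin S$ and hence $S'=S$; 2-edge-connectivity of $G_2$ furnishes a $u_2$-$v_2$-path $Q$ in $G_2-e_2$, and replacing the occurrence $u_1e_1v_1$ in $P$ by the detour $u_1,\,u_1u_2,\,u_2,\,Q,\,v_2,\,v_2v_1,\,v_1$ yields the desired path in $G_1\eident G_2$, which avoids $S'$ because $S'\subseteq V(G_1)\cup E(G_1)\cup\{u_1u_2\}$ meets neither the interior of $Q$ nor the cross edges that were actually used.

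Conversely, given a $w_1$-$w_2$-path $P'$ in $G_1\eident G_2$ avoiding $S'$, if $V(P')\subseteq V(G_1)$ then $E(P')\subseteq E(G_1)\setminus\{e_1\}$ and $P'$ is already a path in $G_1$ that avoids both $e_1$ and $S\setminus\{e_1\}\subseteq S'$, hence avoids $S$. Otherwise, by the structural observation above, $P'$ has the shape $P_1'\,u_1\,(u_1u_2)\,u_2\,Q'\,v_2\,(v_2v_1)\,v_1\,P_2'$ (up to swapping the two cross edges); since $u_1u_2\in E(P')$ forces $u_1u_2\notin S'$, we deduce $e_1\notin S$ and $S'=S$, so $P_1'\,u_1\,e_1\,v_1\,P_2'$ is the sought path in $G_1$.

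For the biconnectivity statement, ($\Rightarrow$) is immediate from the separation equivalence applied with $S=\{v\}$: any cut vertex of $G_i$ persists as a cut vertex of $G_1\eident G_2$. For ($\Leftarrow$), I would suppose for contradiction that some $v$ is a cut vertex of $G_1\eident G_2$; WLOG $v\in V(G_1)$. The separation lemma rules out $v$ separating two vertices of $V(G_1)$, and any two vertices of $V(G_2)$ stay connected via $G_2-e_2\subseteq(G_1\eident G_2)-v$ by 2-edge-connectivity of $G_2$. The remaining case, that $v$ separates some $w\in V(G_1)\setminus\{v\}$ from some $x\in V(G_2)$, is handled by using biconnectivity of $G_1$ to obtain a $w$-to-$\{u_1,v_1\}$-path in $G_1-v$, substituting $e_1$ (if present) by the cross-edge detour through $G_2-e_2$, and then reaching $x$ inside $G_2-e_2$. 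The only place I expect real bookkeeping is the boundary subcase $v\in\{u_1,v_1\}$, where only one of the two cross edges survives; but there $e_1$ is incident to $v$ and so already absent from $G_1-v$, and the surviving cross edge suffices to bridge $G_1-v$ and $G_2-e_2$, settling the contradiction.
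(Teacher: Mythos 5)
Your proposal is correct and matches the paper's intent: the paper proves this lemma by declaring it ``analogous to Lemma \ref{lem: separating sets} and Corollary \ref{coro: biconnectivity}'', and your path-transplant argument (rerouting through a $u_2$-$v_2$-path in $G_2-e_2$, using that a path with both ends in $V(G_1)$ crosses via both new edges or not at all) together with the case analysis for cut vertices is exactly that analogous argument, carried out in full detail. The only cosmetic difference is that you rule out cut vertices directly rather than building two internally disjoint paths via Menger as in Corollary \ref{coro: biconnectivity}, which changes nothing of substance.
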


\begin{proof}
	The proof is analogous to the proofs of Lemma \ref{lem: separating sets} and Corollary~\ref{coro: biconnectivity}.
\end{proof}

\medskip
\paragraph{Treewidth is compatible with the identification operations}
Also the treewidth behaves nicely with the identification operations.
Clearly, the treewidth of a graph can be computed knowing the treewidth of its biconnected components.
Furthermore, a width-optimal tree decomposition of~${G_1 \eident G_2}$ or $G_1 \veident G_2$ can be constructed by just slighlty changing a tree decomposition of $G_1 \cup G_2$.
The results are summarized in Lemma \ref{lemma: treewidth and identification operations}.

\begin{lemma}
	\label{lemma: treewidth and identification operations}
	Let $G_1$ and $G_2$ be $2$-edge-connected graphs. It holds true that
	\begin{align*}
	\tw(G_1\vident G_2) &= \max\{\tw(G_1), \tw(G_2)\},\\
	\tw(G_1\eident G_2) &= \max\{2, \tw(G_1), \tw(G_2)\} ~\text{and} \\
	\tw(G_1\veident G_2) &= \max\{2, \tw(G_1), \tw(G_2)\}.
	\end{align*}
\end{lemma}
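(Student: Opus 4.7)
The strategy is to prove matching upper and lower bounds for each of the three identifications. For the lower bounds I would argue that $G_1$ and $G_2$ are always minors of $G_1\circ G_2$, so minor-monotonicity of treewidth gives $\tw(G_1\circ G_2)\ge\max\{\tw(G_1),\tw(G_2)\}$. For $\vident$ each $G_i$ is already a subgraph of $G_1\vident G_2$ after renaming $u_i$ to the shared vertex. For $\eident$ and $\veident$ I recover $G_1$ by first deleting from $G_2$ all edges not on some fixed $u_2$-$v_2$-path $P$ in $G_2-e_2$ (which exists since $G_2$ is $2$-edge-connected) and then contracting the edges of $P$ together with the newly introduced edges into a single edge, which plays the role of $e_1$; by symmetry the same works for $G_2$. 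The additional lower bound $\tw(G_1\eident G_2),\tw(G_1\veident G_2)\ge 2$ comes from the fact that concatenating a $u_1$-$v_1$-path in $G_1-e_1$ with a $u_2$-$v_2$-path in $G_2-e_2$ through the newly introduced edge(s) yields a cycle in $G_1\circ G_2$, and every graph containing a cycle has treewidth at least $2$.

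For the upper bounds I would start from width-optimal tree-decompositions $(\mathcal{T}_i,\mathcal{B}_i)$ of $G_i$ and glue them by a small local modification. For $\vident$, choose bags $B_i^\star\in\mathcal{B}_i$ with $u_i\in B_i^\star$, rename $u_1$ and $u_2$ to the common vertex $u$, and add one tree-edge between the nodes of $B_1^\star$ and $B_2^\star$. For $\veident$, first choose $B_i^\star$ containing both endpoints of $e_i$ (which exists because $e_i\in E(G_i)$), rename $v_1,v_2$ to the identified vertex $v$ throughout, and insert one new bag $B^\star=\{u_1,u_2,v\}$ adjacent to both $B_1^\star$ and $B_2^\star$. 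For $\eident$ the naive choice of a single new bag $\{u_1,u_2,v_1,v_2\}$ would have size $4$; instead I insert a \emph{pair} of size-$3$ bags $\{u_1,u_2,v_1\}$ and $\{u_2,v_1,v_2\}$, joined by a tree-edge and attached to $B_1^\star$ and $B_2^\star$ respectively.

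Checking the tree-decomposition axioms is routine: every new edge lies in one of the new bags, every old edge is already covered by $\mathcal{T}_1$ or $\mathcal{T}_2$, and the subtree of each vertex remains connected because the two new bags in the $\eident$-construction share $\{u_2,v_1\}$ and because in the $\veident$-case $v$ lies in both $B_1^\star$ and $B_2^\star$ by construction. The only delicate point, and hence the main obstacle, is the tightness of the $\eident$ upper bound: one really must split the insertion into two overlapping triangular bags in exactly the right way in order to cover both new edges $u_1u_2$ and $v_1v_2$ while keeping the subtrees of $u_1,u_2,v_1,v_2$ connected, so that the width does not exceed $\max\{2,\tw(G_1),\tw(G_2)\}$.
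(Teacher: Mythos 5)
Your proposal is correct and takes essentially the same approach as the paper: the lower bounds via minor-monotonicity and the cycle through the new edge(s), and the key upper-bound construction for $\eident$ with the two overlapping size-$3$ bags $\{u_1,u_2,v_1\}$ and $\{u_2,v_1,v_2\}$ attached between bags containing $\{u_i,v_i\}$, are exactly the paper's argument. The only immaterial difference is that you glue tree-decompositions directly for $\vident$ and $\veident$, whereas the paper handles $\vident$ via the fact that treewidth is determined by the biconnected components and obtains the $\veident$ upper bound by observing that $G_1\veident G_2$ is a minor of $G_1\eident G_2$.
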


\begin{proof}
	A graph is of treewidth at most $k$ if and only if all of its biconnected components are of treewidth at most $k$, cf.\ \cite{Bodlaender98}.
	Thus, $\tw(G_1 \vident G_2) = \max\{\tw(G_1), \tw(G_2) \}$.
	
	By the assumption that $G_1$ and $G_2$ are $2$-edge connected we obtain that $G_1 \eident G_2$ and $G_1 \veident G_2$ each contain a cycle of length not less than 3. Consequently $\tw(G_1 \eident G_2) \geq 2$ and $\tw(G_1 \veident G_2) \geq 2$.
	Now, $G_1$ and $G_2$ are minors of $G_1 \eident G_2$ and $G_1 \veident G_2$. Altogether $\tw(G_1\eident G_2) \geq \max\{2, \tw(G_1), \tw(G_2)\}$ and $\tw(G_1\veident G_2) \geq \max\{2, \tw(G_1), \tw(G_2)\}$.
	For the other inequality let $(\mathcal{T}^{(i)}, \mathcal{B}^{(i)})$ be a tree decomposition of $G_i$ and let $B_i \in \mathcal{B}^{(i)}$ be a bag with $\{u_i, v_i\} \in B_i$ for $i \in \{1,2\}$.
	We obtain a tree decomposition of $G_1 \eident G_2$ of width $\max\{2, \tw(G_1), \tw(G_2)\}$ by the following construction. Set
	\begin{align*}
	B_a &\coloneqq \{u_1, u_2, v_1\},\\
	B_b &\coloneqq \{u_2, v_1, v_2\},\\
	\mathcal{B} &\coloneqq \mathcal{B}^{(1)} \cup \mathcal{B}^{(2)} \cup \{B_a, B_b\},\\
	V(\mathcal{T}) &\coloneqq V(\mathcal{T}^{(1)}) \cup V(\mathcal{T}^{(2)}) \cup \{a, b\}~\text{and}\\
	E(\mathcal{T})&\coloneqq E(\mathcal{T}^{(1)}) \cup E(\mathcal{T}^{(2)})) \cup \{1a, ab, b2 \}.
	\end{align*}
	Now $(\mathcal{T}, \mathcal{B})$ is a tree decomposition of $G_1 \eident G_2$ of width $\max\{2, \tw(G_1), \tw(G_2)\}$.
	The inequality for $G_1 \veident G_2$ follows immediately since $G_1 \veident G_2$ is a minor of $G_1 \eident G_2$.
	This settles the claim.
\end{proof}

\section{Subquartic Eulerian graphs of treewidth at most 2}
\label{sec: subquartic}
We are now ready to discuss a constructive characterization starting with a simple class of base graphs -- the closed necklaces -- and then only using the operators $\eident$ and $\vident$. 
More precisely we characterize all Eulerian graphs
with treewidth at most $2$ and maximum degree 4. 
A \emph{closed necklace} is a graph which can be constructed from a cycle of length at least $2$ by duplicating all of its edges.
We define the class $\mathcal{H}$ recursively:

\begin{itemize}
	\item All closed necklaces are contained in $\mathcal{H}$.
	\item If $H_1, H_2 \in \mathcal{H}$, then also $H_1 \eident H_2 \in \mathcal{H}$.
\end{itemize}

\begin{observation}
	\label{obs: necklace}
	The only biconnected 4-regular graph of treewidth 1 is the closed necklace on two vertices.
	The only biconnected 4-regular graph on three vertices is the closed necklace on three vertices.
\end{observation}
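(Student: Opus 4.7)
Both parts of the observation reduce to short combinatorial arguments once we unpack the hypotheses.

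For the first claim, the plan is to note that the definition of a tree decomposition in the paper is insensitive to edge multiplicity (one bag containing both endpoints of a parallel class covers all copies simultaneously), so a graph has treewidth at most $1$ if and only if its underlying simple graph is a forest. Since a biconnected graph is connected, the underlying simple graph of $G$ must then be a tree. However, any tree on at least $3$ vertices contains a non-leaf vertex, and such a vertex is a cut-vertex, contradicting biconnectivity. Hence $G$ has at most $2$ vertices, and since $4$-regularity rules out a single (loopless) vertex, $G$ has exactly $2$ vertices. The only way to make these $4$-regular without loops is to join them by $4$ parallel edges, which is precisely the closed necklace on two vertices.

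For the second claim, the plan is a direct degree count. I would label the three vertices $a,b,c$ and write $x$, $y$, $z$ for the number of parallel edges in the pairs $\{a,b\}$, $\{a,c\}$, $\{b,c\}$, respectively. Since loops are excluded, $4$-regularity amounts to
\begin{align*}
x+y = 4, \qquad x+z = 4, \qquad y+z = 4,
\end{align*}
which has the unique solution $x = y = z = 2$. The resulting multigraph has two parallel edges between every pair of vertices, which is exactly the closed necklace on three vertices; its biconnectivity is immediate since any two vertices are joined by two internally disjoint paths (indeed, by two parallel edges).

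There is essentially no obstacle here. The only point that deserves a sentence of care is the remark that treewidth in the present setting ignores edge multiplicity, so the usual fact ``treewidth $\le 1$ $\Leftrightarrow$ forest'' applies directly; everything else is elementary counting together with the definition of biconnectedness from Section~2.
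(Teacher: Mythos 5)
Your proof is correct: the reduction of the treewidth-$1$ case to the underlying simple graph being a tree (and hence, by biconnectivity, having at most two vertices) and the degree-count $x+y=x+z=y+z=4$ forcing the doubled triangle are both sound and consistent with the paper's definitions, which exclude loops and make treewidth insensitive to edge multiplicity. The paper states this as an Observation without any proof, so your argument simply supplies the routine verification the authors leave implicit, in what is essentially the expected way.
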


\begin{lemma}
	\label{lem: 2cut in non necklaces}
	Let $G$ be a biconnected 4-regular graph of treewidth 2 which is not isomorphic to a closed necklace.
	Then $G$ has a 2-cut $\{e_1, e_2\}$ where no end vertex of $e_1$ coincides with an end vertex of $e_2$.
\end{lemma}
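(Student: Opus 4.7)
My approach is strong induction on $|V(G)|$. By Observation \ref{obs: necklace}, every biconnected $4$-regular graph on at most $3$ vertices is a closed necklace, so the non-necklace hypothesis forces $|V(G)| \geq 4$. For the inductive step I fix a smooth tree decomposition of width $2$ and pick a leaf bag $B_t = \{a,b,c\}$ whose ``private'' vertex is $c$; then all four edges incident to $c$ go to $\{a,b\}$. The proof splits on the distribution of these four edges between $a$ and $b$.

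The distribution $(4,0)$ is ruled out at once: it would force $a$ to have no other neighbors, so deleting $c$ would disconnect $a$, contradicting biconnectivity. For the asymmetric case $(3,1)$, let $x$ be the endpoint of the fourth edge at $a$. If $x \notin \{a,b,c\}$, then the partition $V_1 = \{a,c\}$, $V_2 = V(G) \setminus V_1$ has exactly two crossing edges, $cb$ and $ax$, whose endpoints $\{b,c\}$ and $\{a,x\}$ are pairwise distinct, giving the desired 2-cut. If $x = b$, every edge incident to $\{a,c\}$ stays inside $\{a,b,c\}$, so $b$ would be a cut-vertex unless $|V(G)| = 3$, and either alternative contradicts the hypotheses. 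For the case $(2,2)$, let $k$ denote the multiplicity of $ab$ in $G$. If $k = 2$, the $4$-regularity saturates every edge at $a$, $b$, $c$ inside $\{a,b,c\}$, forcing $|V(G)| = 3$, a contradiction. If $k = 1$ and $a$ and $b$ send their remaining single edges to distinct vertices $x_a \neq x_b$, then the partition $V_1 = \{a,b,c\}$ has cut edges $ax_a$ and $bx_b$ with four distinct endpoints, as required.

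The remaining sub-cases of $(2,2)$ -- namely $k = 0$, or $k = 1$ with $x_a = x_b$ -- call for induction. I reduce to $G' := (G-c) + \{e, e'\}$, where $e, e'$ are two parallel edges newly added between $a$ and $b$. One checks that $G'$ is $4$-regular and has treewidth at most $2$ (the tree decomposition of $G$ itself witnesses this after dropping $c$ from $B_t$) and has one fewer vertex than $G$. The critical point is that $G'$ is biconnected: $\{a,c\}$ cannot be a $2$-vertex-separator of $G$, for otherwise some component of $G-\{a,c\}$ would reach the rest only through $c$, yielding a bridge and contradicting the $2$-edge-connectivity of $G$. Hence $a$ is not a cut-vertex of $G'$, and symmetrically neither is $b$; for any $w \notin \{a,b,c\}$, if $\{w,c\}$ is a $2$-separator of $G$, then $a$ and $b$ must lie in distinct components of $G - \{w,c\}$ (else $w$ alone would separate some component of $G$ from the rest), so the two new $ab$-edges reconnect the pieces inside $G' - w$. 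Applying the inductive hypothesis, either $G'$ is a closed necklace -- in which case reversing the reduction (replacing the doubled link $ab$ by the subdivided doubled path $a$-$c$-$b$) shows $G$ itself is a closed necklace, contradicting the hypothesis -- or $G'$ has a 2-cut with disjoint endpoints. Such a cut must keep $a$ and $b$ on the same side (otherwise it would contain the multi-edge $ab$, whose edges share endpoints), and hence lifts to a 2-cut of $G$ with disjoint endpoints by placing $c$ on that common side.

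The principal obstacle is the biconnectivity verification for $G'$: it requires tracing how $2$-separators of $G$ that contain $c$ interact with the constraint that $c$'s only neighbors are $\{a,b\}$, which is precisely what allows the two new $ab$-edges to repair any separation created by deleting $c$. The remainder of the case analysis is forced once one accounts for the degree constraints at $a$, $b$, and $c$.
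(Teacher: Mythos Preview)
Your proof is correct and follows essentially the same strategy as the paper: induction on $|V(G)|$, using a leaf bag of a smooth width-$2$ tree decomposition to locate a vertex $c$ with $N(c)\subseteq\{a,b\}$, handling the $(3,1)$ distribution by exhibiting a $2$-cut directly, and handling the $(2,2)$ distribution by deleting $c$ and adding two parallel $ab$-edges to obtain a smaller biconnected $4$-regular graph $G'$ of treewidth at most $2$.

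The only organisational difference is that the paper first makes the global case split ``does $G$ contain \emph{any} vertex with the $(2,2)$ pattern?'' and only invokes the leaf bag in the complementary case, whereas you go to the leaf bag immediately and split on the local distribution at $c$; this forces you into a few extra sub-cases (your $(4,0)$, and $(2,2)$ with $k=1$, $x_a\neq x_b$) that the paper absorbs into its two cases. You are also considerably more careful than the paper in justifying that $G'$ is biconnected and that a disjoint-endpoint $2$-cut of $G'$ lifts back to $G$; the paper simply asserts both facts.
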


\begin{proof}
	We prove the following statement by induction on the number of vertices of~$G$:
	A biconnected 4-regular graph of treewidth 2 is either a closed necklace or it has a 2-cut $\{e_1, e_2\}$ where no end vertex of $e_1$ coincides with an end vertex of $e_2$.
	The base case $|V(G)| \leq 3$ is settled by Observation \ref{obs: necklace}.
	Let now $|V(G)| \geq 4$.
	
	\medskip
	\noindent
	Suppose that $G$ contains a vertex $u$ with $N_G(u) = \{x_1, x_2\}$ such that $u$ is connected to each $x_i$ with exactly two edges.
	We construct a graph $G'$ by removing $u$ and adding two edges between $x_1$ and $x_2$.
	Observe that $G'$ is still biconnected, 4-regular, of treewidth at most 2.
	By induction $G'$ is either a closed necklace -- in this case $G$ is also a closed necklace.
	Or $G'$  contains a two-edge-cut of the desired form, then it is also a cut of the desired form in $G$.
	
	\medskip
	\noindent
	Now suppose that each vertex in $G$ which has exactly two neighbours is connected to one of them with three edges and to the other one with a single edge.
	Let $(\{X_i \colon i \in I\}, T)$ be a smooth tree decomposition of $G$ of width 2.
	Let $l$ be a leaf in $T$ with unique neighbour $k$, which exists as~$\tw(G)=2$ and $V(G)\geq 4$.
	As the tree decomposition is smooth we have $X_l=\{u, x_1, x_2\}$ and~$X_k=\{v, x_1, x_2\}$ with distinct vertices $u, v, x_1, x_2\in V(G)$.
	The biconnectivity of $G$ and the structure of the bags $X_l$ and~$X_k$ imply $N_G(u)=\{x_1, x_2\}$.
	We may assume that there are three edges connecting $u$ to one of its neighbours, say $x_1$.
	Let $N_G(x_1)=\{u, x_1^\prime\}$ for some~${x_1^\prime\in V(G)}$. 
	Note that $x_1^\prime\neq x_2$ as otherwise $x_2$ would be a cut-vertex, contradicting the fact that $G$ is biconnected.
	Thus, $\{x_1,x_1^\prime, ux_2\}$ is a $2$-cut of the desired form in $G$. 
\end{proof}

\begin{theorem}
	\label{thm: 4reg tw2}
	Let $G$ be a graph. Then $G\in\mathcal{H}$ if and only if it is a biconnected $4$-regular graph of 
	treewidth at most $2$.
\end{theorem}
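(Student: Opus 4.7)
The plan is to prove both implications by induction, using the operator-invariance lemmas from Section~\ref{sec: construction operations} for the ``only if'' direction and the $2$-cut provided by Lemma~\ref{lem: 2cut in non necklaces} for the ``if'' direction.

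For the ``only if'' direction I induct on the recursive definition of $\mathcal{H}$. In the base case, every closed necklace is biconnected and $4$-regular, and has treewidth at most $2$ (consecutive overlapping bags of size three along the underlying cycle form a valid tree-decomposition). For the inductive step, assume $H_1, H_2 \in \mathcal{H}$ already satisfy the three properties and set $G = H_1 \eident H_2$. Then $G$ is $4$-regular because edge-identification removes exactly one edge and adds exactly one new edge at each of $u_1, v_1, u_2, v_2$, leaving all degrees unchanged; it is biconnected by Lemma~\ref{lem: connectivity and edge identification}; and $\tw(G) \leq 2$ by Lemma~\ref{lemma: treewidth and identification operations}.

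For the ``if'' direction I induct on $|V(G)|$. If $|V(G)| \leq 3$, Observation~\ref{obs: necklace} shows that $G$ is a closed necklace and hence $G \in \mathcal{H}$. If $|V(G)| \geq 4$ and $G$ is not a closed necklace, Lemma~\ref{lem: 2cut in non necklaces} yields a $2$-cut $\{e_1, e_2\}$ whose four endpoints are pairwise distinct. Writing $e_i = a_ib_i$ with $a_1, a_2 \in V_1$ and $b_1, b_2 \in V_2$ for the corresponding partition of $V(G)$, I set $G_1 \coloneqq G[V_1] + a_1a_2$ and $G_2 \coloneqq G[V_2] + b_1b_2$. A direct unravelling of the definitions gives
\[ G = (G_1, a_1a_2, a_1) \eident (G_2, b_1b_2, b_1), \]
so it suffices to verify that $G_1$ and $G_2$ satisfy the inductive hypothesis.

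The individual checks are short: $a_1 \neq a_2$ forces $|V_1| \geq 2$ and likewise $|V_2| \geq 2$, hence $|V(G_i)| < |V(G)|$; the endpoints $a_1, a_2$ (resp.\ $b_1, b_2$) lose one cut-edge when passing to $G[V_i]$ but regain the newly added edge, so $G_i$ is again $4$-regular; and each $G_i$ is a minor of $G$ (delete the vertices of the other side after first contracting a $b_1$-$b_2$-path, then contract one of the two remaining bridge edges), so $\tw(G_i) \leq \tw(G) \leq 2$. The main technical point I expect to need care with is inheriting biconnectivity of $G_i$ from that of $G$: from the $2$-edge-connectivity of $G$ one first argues that removing only $e_1$ keeps $G$ connected, which forces both $G[V_1]$ and $G[V_2]$ to be connected; then a short case distinction—depending on whether a hypothetical cut-vertex or cut-edge of $G_i$ is $a_1$, $a_2$, the new edge $a_1a_2$, or something else—rules out each possibility. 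Once this is settled, the inductive hypothesis yields $G_1, G_2 \in \mathcal{H}$ and therefore $G \in \mathcal{H}$.
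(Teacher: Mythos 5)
Your proof is correct and takes essentially the same route as the paper: the forward direction by induction via Lemma~\ref{lem: connectivity and edge identification} and Lemma~\ref{lemma: treewidth and identification operations}, and the backward direction by induction on $|V(G)|$ using Observation~\ref{obs: necklace} and the non-adjacent $2$-cut from Lemma~\ref{lem: 2cut in non necklaces}, with the pieces being $4$-regular minors of $G$. The only difference is cosmetic: you establish biconnectivity of $G_1$ and $G_2$ by a direct case analysis, while the paper invokes Lemma~\ref{lem: connectivity and edge identification} (which applies since the pieces are connected and even, hence $2$-edge-connected); both are fine.
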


\begin{proof}
	Let $G \in \mathcal{H}$. 
	Note that this implies that $G$ is $2$-edge-connected by Lemma \ref{lem: connectivity and edge identification}. 
	If $G$ is a closed necklace, it is biconnected, 4-regular and fulfils $\tw(G) \leq 2$.
	We prove that $G$ fulfils the desired properties by induction on the number of vertices.
	If~$V(G)=2$ it is a closed necklace.
	So assume that $V(G)\geq 3$ and $G$ is not a closed necklace.
	Consequently $G = G_1 \eident G_2$ for two graphs~$G_1, G_2\in\mathcal{H}$.
	By induction~$G_1$ and $G_2$ are biconnected, $4$-regular and have treewidth at most $2$.
	Then also $G$ is 4-regular, $\tw(G) \leq 2$ by Lemma \ref{lemma: treewidth and identification operations} and $G$ is biconnected by Lemma \ref{lem: connectivity and edge identification}.
	
	\medskip \noindent
	Let now $G$ be biconnected $4$-regular of treewidth at most $2$.
	We prove $G\in\mathcal{H}$ by induction on~$|V(G)|$.
	If $|V(G)| \in \{2,3\}$ then $G$ must be the necklace with two or three vertices by Observation~\ref{obs: necklace} and thus $G \in \mathcal{H}$.
	Let now $|V(G)| \geq 4$.
	If $G$ is not a closed necklace, then $\tw(G) = 2$ by Observation \ref{obs: necklace}. We may apply Lemma \ref{lem: 2cut in non necklaces} and obtain that $G = G_1 \eident G_2$ for suitable $G_1, G_2$.
	Observe that $G_1$ and $G_2$ are biconnected, cf.~Lemma~\ref{lem: connectivity and edge identification}, and 4-regular. Furthermore, their treewidth is bounded by 2 since they are minors of $G$.
	By induction, $G_1, G_2 \in \mathcal{H}$. Thus, also $G \in \mathcal{H}$.
\end{proof}

Let $v$ be a cut-vertex in a 4-regular graph $G$.
The degree of $v$ in the biconnected components of $G$ is 2 since otherwise the edges incident to $v$ would contain an odd cut in an Eulerian graph.
Consequently, all degrees of vertices in biconnected components of $G$ lie in $\{2,4\}$.
We conclude that a biconnected component of a 4-regular graph is either a cycle or can be obtained from a biconnected 4-regular graph by subdivision.
Together with Theorem \ref{thm: 4reg tw2} we obtain:

\begin{corollary}
	\label{cor:4reg}
	A connected graph $G$ is $4$-regular of treewidth at most $2$ if and only if each of its biconnected components $H$ is either a cycle such that each of its vertices is a cut-vertex in $G$ or the graph obtained by successively resolve all former cut-vertices in $H$ is contained in $\mathcal{H}$.
\end{corollary}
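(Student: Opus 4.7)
The plan is to localize the analysis to each biconnected component of $G$ and invoke Theorem \ref{thm: 4reg tw2}. I will rely on the observation stated in the paragraph immediately preceding the corollary: in a connected $4$-regular graph, every vertex of a biconnected component has degree $2$ or $4$ in that component, and the degree-$2$ vertices are exactly the cut-vertices of $G$ lying in it.

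For the forward direction, assume $G$ is connected, $4$-regular, with $\tw(G)\le 2$, and fix a biconnected component $H$. If $H$ is a cycle then every vertex of $H$ has degree $2$ in $H$ and hence is a cut-vertex of $G$. Otherwise $H$ contains at least one vertex of degree $4$, and successively resolving all degree-$2$ vertices of $H$ (precisely the cut-vertices of $G$ in $H$) yields a biconnected $4$-regular graph $H'$, which is a minor of $H$ and hence satisfies $\tw(H')\le \tw(H)\le \tw(G)\le 2$. Theorem \ref{thm: 4reg tw2} then places $H'$ in $\mathcal{H}$.

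For the reverse direction, assume every biconnected component $H$ of $G$ satisfies the dichotomy. The treewidth of $G$ equals the maximum treewidth over its biconnected components, and in each case $\tw(H)\le 2$: cycles trivially, and a graph obtained from some $H'\in\mathcal{H}$ by subdividing edges inherits a width-$2$ decomposition by attaching, for every subdivision vertex $w$ of an edge $uv$, a new bag $\{u,v,w\}$ to any bag of the decomposition of $H'$ that contains $\{u,v\}$. For $4$-regularity, any non-cut-vertex of $G$ lies in a unique block, necessarily of type (b), corresponding to an original vertex of the associated $H'\in\mathcal{H}$ whose degree $4$ is preserved by subdivision. A cut-vertex $v$ of $G$ contributes exactly $2$ to its total degree from each block containing it, and the parity argument recalled just before the corollary forces it to lie in exactly two blocks, giving $\deg_G(v)=4$.

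The forward direction is a direct application of Theorem \ref{thm: 4reg tw2} to the resolved blocks. The main obstacle I anticipate is the reverse direction's degree accounting at cut-vertices, which requires invoking the even-cut / Eulerian parity property from the preceding paragraph to pin down the number of blocks in which a cut-vertex can lie.
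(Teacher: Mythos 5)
Your forward direction is essentially the paper's (very terse) argument: the even-cut observation shows that every cut-vertex of a $4$-regular $G$ has degree $2$ in each block containing it, so a block is either a cycle or becomes a biconnected $4$-regular graph after resolving its degree-$2$ vertices, and Theorem~\ref{thm: 4reg tw2} applies to the resolved graph because resolving yields a minor and hence does not increase treewidth. This part is sound (modulo the routine checks that the resolved graph is biconnected and loop-free, which the paper glosses over as well), and the cycle case correctly uses $4$-regularity to conclude that every degree-$2$ vertex of a cycle block is a cut-vertex of $G$.

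The reverse direction, however, breaks exactly where you flag the "main obstacle". You cannot invoke the parity/odd-cut argument from the paragraph preceding the corollary to conclude that a cut-vertex lies in exactly two blocks: that argument presupposes that $G$ is $4$-regular (hence Eulerian), which is precisely what you are trying to prove, so the appeal is circular. The block hypotheses do give $\deg_H(v)=2$ for every block $H$ containing a cut-vertex $v$ (in the second alternative this is implicit in $v$ being resolvable), but nothing in the right-hand side bounds the number of blocks sharing a cut-vertex. Concretely, take three copies of the closed necklace on two vertices, subdivide one edge of each, and identify the three subdivision vertices into a single vertex $v$: every block satisfies the second alternative (resolving $v$ in it returns the closed necklace, which lies in $\mathcal{H}$), yet $\deg_G(v)=6$, so $G$ is not $4$-regular. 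Hence the degree accounting at cut-vertices cannot be completed from the stated hypotheses alone; one needs in addition that every cut-vertex lies in exactly two blocks (which is what the forward direction produces and what the corollary evidently intends, the paper itself only sketching the forward structure before citing Theorem~\ref{thm: 4reg tw2}). As written, your step "forces it to lie in exactly two blocks" is a genuine gap, and it is not repairable without adding such a condition or reinterpreting the statement.
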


We obtain a constructive characterization of the class $\mathcal{H}^\prime$ containing all Eulerian graphs of treewidth at most 2 and with maximum degree 4 in a straight forward way:
\begin{itemize}
	\item All closed necklaces are in $\mathcal{H}'$.
	\item All cycles are in $\mathcal{H}'$.
	\item If $G \in \mathcal{H}'$ and $G'$ is obtained from $G$ by subdividing an edge then $G' \in \mathcal{H}'$.
	\item If $G_1, G_2 \in \mathcal{H}'$, then $G_1 \eident G_2 \in \mathcal{H}'$.
	\item If for $i \in \{1,2\}$ $G_i \in \mathcal{H}'$ and $v_i \in V(G_i)$ with $\deg_{G_i}(v_i) = 2$, then $(G_1, v_1)\vident (G_2, v_2) \in \mathcal{H}'$.
\end{itemize}

Now that we have extensively studied the applications of the binary operator $\eident$, we continue with considering the 
class of graphs which arises using the operators $\vident$ and $\veident$.

\section{Graphs with unique cycle-decomposition size}
\label{sec: nu = c}
In this section we prove our main result -- two equivalent characterizations for the class of graphs where the minimum and maximum number of cycles in a cycle decomposition coincide.
We show first that the class of graphs with unique cycle decomposition size is contained in the class of graphs where two cycles intersect at most twice.

\begin{lemma}
	\label{lem:2cyc} \hfill
	\begin{enumerate}[(i)]
		\item Let $H$ be an Eulerian subgraph of an Eulerian graph $G$. If $c(H) < \nu(H)$ then $c(G) < \nu(G)$.
		\item Let $H'$ be a graph which is decomposable into two edge disjoint cycles that have more than two vertices in common. 
		Then $c(H') = 2$ and $\nu(H')~\geq~3$.
	\end{enumerate}
	In particular: An Eulerian graph $G$ containing two edge-disjoint cycles that have more than two vertices in common satisfies $c(G) < \nu(G)$.
\end{lemma}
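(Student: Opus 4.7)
The plan is to prove (i) and (ii) separately and then combine them to obtain the ``in particular'' clause. For part (i), I would observe that since $H$ and $G$ are both Eulerian their degree sequences are even, so $G - E(H)$ also has all vertices of even degree, and therefore admits a cycle decomposition $\mathcal{D}$ (every graph with all even degrees decomposes into cycles, working component-wise). Combining $\mathcal{D}$ with a minimum cycle decomposition of $H$ gives a cycle decomposition of $G$ of size $c(H) + |\mathcal{D}|$, so $c(G) \leq c(H) + |\mathcal{D}|$. The analogous construction with a maximum cycle decomposition of $H$ gives $\nu(G) \geq \nu(H) + |\mathcal{D}|$. Subtracting yields $\nu(G) - c(G) \geq \nu(H) - c(H) > 0$.

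For part (ii), the inequality $c(H') \leq 2$ is witnessed by $\{C_1, C_2\}$, while $c(H') \geq 2$ follows because any vertex in $V(C_1) \cap V(C_2)$ has degree $4$ in $H'$, which prevents $H'$ from being a single cycle. Hence $c(H') = 2$. The core of the lemma is the lower bound $\nu(H') \geq 3$, which I would establish by an arc-swapping argument. Pick three distinct common vertices $v_0, v_1, v_2 \in V(C_1) \cap V(C_2)$ and, after relabelling, assume they appear on $C_1$ in the cyclic order $(v_0, v_1, v_2)$. This partitions $C_1$ into three arcs $p_1, p_2, p_3$, where $p_i$ is a $v_{i-1}$-$v_i$-path (indices mod $3$). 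On $C_2$ the three vertices appear either in the order $(v_0, v_1, v_2)$ or in the order $(v_0, v_2, v_1)$; in either case $C_2$ splits into three arcs $q_1, q_2, q_3$ that can be matched bijectively with the $p_i$ in such a way that each matched pair shares both of its endpoints.

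Each union $p_i \cup q_{\sigma(i)}$ is then a connected subgraph in which every vertex has even degree: the two shared endpoints have degree $2$, while each internal vertex has degree $2$ or $4$ depending on whether it lies on only one or both of the matched arcs. Consequently every such union is Eulerian and decomposes into at least one cycle. Collecting these decompositions over the three pairs yields a cycle decomposition of $H'$ of size at least $3$, proving $\nu(H') \geq 3$. The ``in particular'' clause follows at once: given two edge-disjoint cycles $C_1, C_2 \subseteq G$ with $|V(C_1) \cap V(C_2)| \geq 3$, the subgraph $H' \coloneqq C_1 \cup C_2$ is Eulerian (connected with even degrees), (ii) gives $c(H') < \nu(H')$, and (i) lifts this to $c(G) < \nu(G)$.

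The main obstacle is convincing oneself that the argument for $\nu(H') \geq 3$ is insensitive to whether the paired arcs $p_i$ and $q_{\sigma(i)}$ share interior vertices. If they are internally vertex-disjoint their union is already a simple cycle, and we obtain exactly three cycles; if they share an interior vertex $w$, then $w$ has degree $4$ in the union, the union is not a simple cycle, and the local Eulerian decomposition contributes at least two cycles, which only strengthens the bound. Either way the total cycle count is at least $3$.
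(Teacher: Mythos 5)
Your proof is correct, and part (i) together with the ``in particular'' clause is essentially the paper's own argument: the paper likewise extends a minimum respectively maximum cycle decomposition of $H$ by a fixed cycle decomposition of $G-E(H)$ to get $c(G)\leq c(H)+|\mathcal{C}| < \nu(H)+|\mathcal{C}| \leq \nu(G)$. For the key bound $\nu(H')\geq 3$ in part (ii), however, you take a genuinely different route. The paper uses only two of the three common vertices for cutting: it takes in each $C_i$ the $v_1$-$v_2$-arc $P_i$ avoiding $v_3$, removes the even subgraph $P_1\cup P_2$ (which contributes at least one cycle), and notes that $v_3$ has degree $4$ in the even remainder $H'-E(P_1\cup P_2)$, so any cycle decomposition of the remainder contains at least two cycles; this gives $\nu(H')\geq 1+2=3$ without any case analysis of how the three vertices are ordered on $C_2$. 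You instead cut both cycles at all three vertices and pair the six arcs by their endpoint pairs, partitioning $E(H')$ into three connected even (hence Eulerian) subgraphs, each contributing at least one cycle. Your route requires the observation, which you handle correctly, that the three arcs of each cycle realize all three endpoint pairs irrespective of the cyclic order on $C_2$, and that interior vertices shared by a matched pair of arcs only get degree $4$; in exchange it is more symmetric and yields an explicit partition of $H'$ into three Eulerian pieces. You also prove $c(H')=2$ explicitly (upper bound by $\{C_1,C_2\}$, lower bound from a degree-$4$ common vertex), which the paper asserts without argument. Both proofs are valid.
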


\begin{proof}
	Let $\overline{\mathcal{C}}$ be a maximum cycle decomposition of $H$ and $\underline{\mathcal{C}}$ be a minimum cycle decomposition of $H$.
	Further let $\mathcal{C}$ be a cycle decomposition of $G - E(H)$. We obtain
	\begin{align*}
	c(G) \leq |\mathcal{C} \cup \mathcal{\underline{C}}| < |\mathcal{C} \cup \overline{\mathcal{C}}| \leq \nu(G),
	\end{align*}
	proving the first claim.
	
	Let now $H'=C_1\cup C_2$ for two edge-disjoint cycles $C_1, C_2$. 
	Further let $v_1, v_2, v_3\in V(C_1)\cap V(C_2)$ be three distinct vertices. 
	Let $i\in\{1,2\}$. 
	As $C_i$ is a cycle there exists a path $P_i$ from~$v_1$ to $v_2$ with~$v_3\notin V(P_i)$, which is a subgraph of $C_i$.
	Then $P_1P_2$ is even and the degree of $v_3$ in~${H'-E(P_1P_2)}$ is $4$. 
	We get $\nu(H')\geq \nu(H'-E(P_1P_2)) + \nu(P_1P_2) \geq 2 + 1 = 3$ as claimed.
\end{proof}

We are now ready to present a constructive characterization of all Eulerian graphs with the property that the number of cycles in a cycle decomposition is unique.
Let us define a class of graphs $\mathcal{G}$, where the base graphs are Eulerian multiedges and all other graphs recursively arise from operations on two disjoint graphs in the class.
\begin{itemize}
	\item If $G$ is an Eulerian multiedge, i.e.\ a graph that consist only of two vertices and an even number of parallel edges between the two vertices, then $G \in \mathcal{G}$.
	\item Let $G_1, G_2 \in \mathcal{G}$ with $V(G_1) \cap V(G_2) = \emptyset$ and $v_i$ a vertex in $G_i$ for $i \in \{1,2\}$. Then $(G_1,v_1) \vident (G_2,v_2) \in \mathcal{G}$.
	\item Let $G_1, G_2 \in \mathcal{G}$ with $V(G_1) \cap V(G_2) = \emptyset$, $e_i$ be an edge in $G_i$ from $u_i$ to $v_i$ for $i \in \{1,2\}$. Then $(G_1, e_1, u_1) \veident (G_2, e_2, u_2) \in \mathcal{G}$.
\end{itemize}

\medskip

\begin{theorem} \label{thm: main}
	Let $G$ be a graph. The following three statements are equivalent.
	\begin{enumerate}[(i)]
		\item \label{itm: cG = nuG} $G$ is Eulerian with $c(G) = \nu(G)$.
		\item \label{itm: G2cactus} $G$ is Eulerian and no two edge disjoint cycles in $G$ have more than two vertices in common.
		\item \label{itm: G in mathcalG} $G \in \mathcal{G}$.
	\end{enumerate}
\end{theorem}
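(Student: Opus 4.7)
The plan is to prove the cycle of implications $(iii)\Rightarrow(i)\Rightarrow(ii)\Rightarrow(iii)$. The first two implications are short, while $(ii)\Rightarrow(iii)$ carries essentially all of the content.

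For $(iii)\Rightarrow(i)$, I would induct on the recursive construction of $G\in\mathcal{G}$. In the base case $G$ is an Eulerian multiedge on two vertices and $2k$ parallel edges; the only cycles in $G$ are the pairs of parallel edges, hence every cycle decomposition has exactly $k$ cycles and $c(G)=\nu(G)=k$. The inductive step is immediate from the Corollary following Lemma~\ref{lem: operations preserve cycles}: the quantity $\nu-c$ is additive under both $\vident$ and $\veident$, so if it vanishes on the two factors it vanishes on their composition. For $(i)\Rightarrow(ii)$, I would simply apply the final statement of Lemma~\ref{lem:2cyc}: two edge-disjoint cycles with three common vertices already force $c(G)<\nu(G)$.

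The heart of the argument is $(ii)\Rightarrow(iii)$. I would induct on $|E(G)|$. If $G$ is an Eulerian multiedge we are done. If $G$ has a cut-vertex $u$, then the biconnected components of $G$ partition its edge set and can be regrouped into two disjoint Eulerian graphs $G_1,G_2$ meeting only in copies $u_1,u_2$ of $u$ with $G=(G_1,u_1)\vident(G_2,u_2)$. Each $G_i$ is Eulerian with strictly fewer edges, and any two edge-disjoint cycles of $G_i$ form such a pair in $G$, so property~(ii) is inherited; by induction $G_1,G_2\in\mathcal{G}$ and hence $G\in\mathcal{G}$.

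The main obstacle is the remaining case, where $G$ is biconnected but not a multiedge. Here I need to produce a vertex $v\in V(G)$ and an edge $e=u_1u_2\in E(G)$ with $v\notin\{u_1,u_2\}$ such that removing $v$ and $e$ separates $G$ into two parts $A\ni u_1$ and $B\ni u_2$; one then defines $G_1$ as the subgraph on $A\cup\{v\}$ together with a fresh edge $u_1v$, and $G_2$ analogously, so that $G=(G_1,u_1v,u_1)\veident(G_2,u_2v,u_2)$. Each $G_i$ is again Eulerian, has strictly fewer edges (the multiedge base case is reached eventually because every $\veident$-reduction removes edges), and inherits property~(ii), because a pair of edge-disjoint cycles in $G_i$ either already lies in $G$ or can be rerouted through the deleted side via a path avoiding $v$, producing two edge-disjoint cycles in $G$ sharing the same vertex sets. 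The existence of such a separating pair $(v,e)$ is where condition~(ii) must really be used: I would pick a shortest cycle $C\subseteq G$ and analyze the attachment of each component $H$ of $G-E(C)$ to $C$. Property~(ii) forces every cycle of $H$ to meet $V(C)$ in at most two vertices, so $H$ can attach to $C$ only in a very restricted way. Either $C$ has length $2$, in which case one of its vertices together with a suitable non-$C$ edge in its unique attached component provides $(v,e)$, or $|V(C)|\geq 3$ and the restricted attachment of each $H$ guarantees that some vertex $v\in V(C)\cap V(H)$ together with an edge of $C$ not incident to $v$ disconnects $G$ as required. Formalizing this case analysis — plausibly through a self-contained structural lemma analogous to Lemma~\ref{lem: 2cut in non necklaces} — is the step I expect to be the main difficulty.
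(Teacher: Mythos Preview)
Your implications $(iii)\Rightarrow(i)$ and $(i)\Rightarrow(ii)$ match the paper exactly, and your reduction of $(ii)\Rightarrow(iii)$ to the biconnected case together with the inheritance of property~(ii) under $\veident$-splitting is correct and essentially what the paper does.

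The genuine gap is the step you yourself flag: showing that a biconnected Eulerian graph $G$ satisfying~(ii) and not a multiedge admits a vertex--edge separator $(v,e)$. Your shortest-cycle approach does not get there. From~(ii) you correctly deduce that every cycle of a component $H$ of $G-E(C)$ meets $V(C)$ in at most two vertices, but this does \emph{not} force $H$ itself to attach to $C$ in only two vertices: $H$ is an even graph, and its cycle decomposition may consist of many $2$-attached cycles whose attachment pairs are all different, so $|V(H)\cap V(C)|$ can be large. Without a bound on the attachment set you have no obvious candidate for $(v,e)$, and the case analysis you sketch (length-$2$ versus length $\geq 3$) does not close.

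The paper avoids this entirely by arguing the contrapositive. Assume $G$ is a minimal counterexample; then $G$ is biconnected, and moreover $G-e$ is biconnected for every edge $e$ and $G-v$ is $2$-edge-connected for every vertex $v$ (otherwise a $\veident$-split exists and minimality finishes). From these two facts one derives, using~(ii), that every vertex has at most one neighbour joined by parallel edges, and then that $|N(v)|\geq 4$ for all $v$. The punchline is a greedy path: walk along fresh vertices until you reach $v_k$ with $N(v_k)\subseteq\{v_1,\dots,v_{k-1}\}$ and $v_1v_k\in E(G)$; the cycle $C=v_1\cdots v_kv_1$ and a cycle $C'$ through a simple edge $v_kv_i$ in $G-E(C)$ are edge-disjoint and share $v_k$ together with two of its neighbours on $C$, contradicting~(ii). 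This degree-forcing plus greedy-path argument is the missing idea in your outline.
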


\begin{proof}
	\noindent \eqref{itm: cG = nuG} implies \eqref{itm: G2cactus}: 
	This implication is stated in Lemma \ref{lem:2cyc}.

	\bigskip
	
	\noindent \eqref{itm: G2cactus} implies \eqref{itm: G in mathcalG}:
	Suppose that there are graphs satisfying \eqref{itm: G2cactus} but not \eqref{itm: G in mathcalG}.
	Then amongst those graphs there exists a graph $G$ of lowest order.
	Note that $G$ is not an Eulerian multiedge, since these satisfy \eqref{itm: G in mathcalG}.
	We establish further structural properties of $G$:
	\begin{property}\label{prop 0}
		$G$ is biconnected.
	\end{property}
	\noindent \emph{Proof of Property \ref{prop 0}:}
	Suppose that $G$ is not biconnected. Then there exists some cut-vertex $v\in V(G)$. Thus,
	there are two graphs $G_1$ and $G_2$ such that $G=G_1\vident G_2$. As no two cycles in 
	$G_1$ and $G_2$ have more than two vertices in common, we get $G_1,G_2\in\mathcal{G}$ by the minimality of $G$ and
	thereby $G\in\mathcal{G}$, contradicting the choice of $G$.
	\begin{property}\label{prop I}
		For all $e \in E(G)\colon G-e$ is biconnected.
	\end{property}
	\noindent \emph{Proof of Property \ref{prop I}:}
	Suppose that $G-e$ is not biconnected. Then there are two graphs~$G_1$ and $G_2$ such that $G = G_1 \veident G_2$.
	Note that the vertex $v\in V(G)$ that is split up in $G_1$ and $G_2$ cannot be an endpoint of $e$, as $G$
	is biconnected by Property \ref{prop 0}.
	Further observe that neither $G_1$ nor $G_2$ contain edge disjoint cycles with more than two vertices in common.
	By the minimality of $G$ we obtain $G_1, G_2 \in \mathcal{G}$.
	We obtain $G \in \mathcal{G}$. A contradiction.
	
	\begin{property}\label{prop II}
		For all $v \in V(G)\colon G-v$ is 2-edge-connected.	
	\end{property}
	\noindent \emph{Proof of Property \ref{prop II}:}
	Suppose that $G-v$ contains a one-edge-separator.
	Again, there are two graphs $G_1$ and $G_2$ such that $G = G_1 \veident G_2$ and we can argue as in the proof of Property \ref{prop I}.
	
	\begin{property}\label{prop III}
		For all $v \in V(G)$ there is at most one neighbour of $v$ that is connected to $v$ by multiple edges.
	\end{property}
	\noindent \emph{Proof of Property \ref{prop III}:}
	Assume that there is a vertex $v$ that is connected to two different vertices $w_1$ and $w_2$ by multiple edges. By Property \ref{prop II} we know that $G-v$ is $2$-edge-connected.
	By Menger's Theorem (see \cite{West01}) there exist edge disjoint paths $P_1$, $P_2$ from $w_1$ to $w_2$ in $G-v$.
	But then the two cycles $vw_1P_1w_2v$ and $vw_1P_2w_2v$ are edge disjoint and share more than two vertices.
	This is a contradiction to (\ref{itm: G2cactus}).
	
	\begin{property}\label{prop IV}
		For all $v \in V(G)$ we have $|N(v)| \geq 4$.
	\end{property}
	\noindent \emph{Proof of Property \ref{prop IV}:}
	Suppose there is a vertex $v$ with $|N(v)|\leq 3$.
	First we assume that $|N(v)|=1$. Then $G$ is either an Eulerian multiedge or not biconnected which is a contradiction to the assumption respectively Property \ref{prop 0}.
	Now assume that $|N(v)|=2$, say $N(v) = \{w_1,w_2\}$.
	By Property \ref{prop III} $v$ cannot be connected to both neighbours by multiple edges, say $v$ is connected to $w_1$ by a single edge $e$. If we delete $w_2$ from $G-e$ we isolate $v$ which is a contradiction to Property \ref{prop I}.
	
	Last assume that $|N(v)| = 3$, say $N(v) = \{w_1, w_2, w_3\}$.
	By Property \ref{prop III}, we may further assume that $v$ is connected to $w_1$ and $w_2$ by a single edge only.
	By Property \ref{prop I} the graph $G-vw_1$ is biconnected.
	Thus, there is a cycle $C$ in $G-vw_1$ containing the vertices $v$ and $w_1$.
	Since $w_2$ and~$w_3$ are the only neighbours of $v$ in $G-vw_1$, we obtain that $C$ also contains the vertices $w_2$ and~$w_3$.
	The graph~$G-E(C)$ is even and consequently $vw_1$ is contained in some cycle $C'$ in $G-E(C)$.
	The single edge~$vw_2$ is contained in $C$.
	Hence, $v$ has only neighbours $w_1$ and $w_3$ in $G-E(C)$.
	Thus, $C'$ contains the vertex~$w_3$ as well.
	Thereby $C$ and $C'$ are two edge disjoint cycles with more than two vertices in common -- a contradiction.
	
	\medskip
	
	\noindent We exploit Properties \ref{prop III} and \ref{prop IV} to complete the proof: Regard a path $P=v_1 v_2\dots v_k$ with the property that $N(v_k)\subseteq\{v_1,\dots,v_{k-1}\}$ and $v_1v_k\in E$.
	Such a path can be found in a greedy fashion: 
	Start at some vertex $v$ in the graph and always move to a new vertex until all neighbours of the current vertex $w$ have already been visited.
	The resulting path contains the neighbourhood of $w$. 
	Now simply set $v_1$ to be the neighbour of $w$ that has been visited first and the subsequent vertices accordingly.
	By Property \ref{prop IV} we have $|N(v_k)|\geq 4$.
	Thus, we can find $i,j\in\{2,..,k-2\}$ with $i\neq j$ and $v_i,v_j\in N(v_k)$.
	Property \ref{prop III} implies that $v_k$ is connected to $v_i$ or $v_j$ by a single edge.
	Without loss of generality let this be $v_i$.
	Set $C\coloneqq v_1v_2...v_kv_1$.
	Then $G-E(C)$ is an even graph and we can find a cycle $C'$ in $G-E(C)$ containing the edge $v_kv_i$.
	Since $N(v_k)\subseteq\{v_1,\dots,v_{k-1}\}$ the two edge disjoint cycles $C$ and $C'$ have more than two vertices in common, which contradicts assumption \eqref{itm: G2cactus}.
	Altogether $G\in\mathcal{G}$.
	
	\bigskip
	
	\noindent \eqref{itm: G in mathcalG} implies \eqref{itm: cG = nuG}:
	Eulerian multiedges fulfil property \eqref{itm: cG = nuG}.
	For~$i \in \{1,2\}$ let $G_i$ be a graph that satisfies $c(G_i) = \nu(G_i)$ 
	If $G$ arises from vertex-identification or vertex-edge-identification from graphs $G_1$ and $G_2$, by Lemma \ref{lem: operations preserve cycles} we have
	$$\nu(G)-c(G)=\nu(G_1)-c(G_1) + \nu(G_2) - c(G_2) = 0,$$ 
	which implies \eqref{itm: cG = nuG}.
\end{proof}

Combining the constructive characterization in Theorem \ref{thm: main} with Lemma \ref{lemma: treewidth and identification operations} we obtain that all graphs with unique cycle number are of treewidth at most 2.
In particular, they are planar and at most 2-vertex-connected.

\section{Recognition of graphs with unique cycle number}
\label{sec: algo}
In this section, we present an $\mathcal{O}(n(m+n))$-algorithm which decides if the cycle number of a given Eulerian graph is unique.
The main idea of the algorithm is to exploit the following two observations:

\begin{observation}
	Cycles are subgraphs of the biconnected components of a given graph.
	Hence:
	A graph $G$ fulfils $c(G) = \nu(G)$ if and only if this equation holds true for each of its biconnected components.
\end{observation}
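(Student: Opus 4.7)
The plan is to reduce the claim to the additivity statement for vertex-identification that was established in Lemma \ref{lem: operations preserve cycles}(i). The key fact to exploit is that every cycle in a graph lies entirely within a single biconnected component; this is standard, since two edges belong to the same biconnected component precisely when they lie on a common cycle.

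First, I would note that for a connected Eulerian graph $G$ with biconnected components $B_1, \dots, B_k$, the decomposition tree of $G$ along its cut-vertices shows that $G$ can be obtained from the disjoint union of $B_1, \dots, B_k$ by a sequence of vertex-identifications performed at cut-vertices. Since each $B_i$ is itself Eulerian (every vertex of $G$ has even degree, and the degree of any vertex in a biconnected component equals the number of its incident edges in that component, which must be even by a parity argument on the cut at that vertex), we may iteratively apply Lemma \ref{lem: operations preserve cycles}(i) to obtain
\begin{align*}
c(G) &= \sum_{i=1}^k c(B_i), \\
\nu(G) &= \sum_{i=1}^k \nu(B_i).
\end{align*}

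Alternatively, and perhaps more directly, I would argue from scratch: given any cycle decomposition $\mathcal{C}$ of $G$, each cycle $C \in \mathcal{C}$ is contained in exactly one biconnected component, so $\mathcal{C}$ partitions into $\mathcal{C}_1 \sqcup \dots \sqcup \mathcal{C}_k$ where $\mathcal{C}_i$ is a cycle decomposition of $B_i$. Conversely, concatenating decompositions of the $B_i$ yields a decomposition of $G$. This bijective correspondence between cycle decompositions of $G$ and tuples of cycle decompositions of the $B_i$ gives the two displayed identities immediately.

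The conclusion is then a trivial arithmetic observation: since $c(B_i) \leq \nu(B_i)$ for every $i$, the equality $\sum c(B_i) = \sum \nu(B_i)$ forces $c(B_i) = \nu(B_i)$ for each $i$, and the converse is clear by summing. The only real work, and the step that must be stated carefully, is the fact that cycles live inside biconnected components; everything else is bookkeeping. In a disconnected Eulerian graph (if one admits isolated vertices), the same proof applies component-wise, so no additional argument is needed.
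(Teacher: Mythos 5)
Your proof is correct, and your second, direct argument (cycles lie in a single biconnected component, so cycle decompositions of $G$ correspond bijectively to tuples of decompositions of the blocks, making $c$ and $\nu$ additive over blocks) is exactly the reasoning the paper leaves implicit when it states this observation without further proof. Your alternative route via Lemma~\ref{lem: operations preserve cycles}(i) and vertex-identification along the block--cut tree is also fine, just heavier than needed.
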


\begin{observation}
	Let $G$ be a biconnected graph. Then $G$ fulfils $c(G) = \nu(G)$ if and only if it fulfils one of the following two properties:
	\begin{itemize}
		\item The graph $G$ is an Eulerian multiedge.
		\item There exists graphs $G_1$ and $G_2$ such that $G = G_1 \veident G_2$. For any two graphs $G_1$ and $G_2$ satisfying this equation it holds that $c(G_1) = \nu(G_1)$ and $c(G_2) = \nu(G_2)$.
	\end{itemize}
\end{observation}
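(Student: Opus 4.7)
The plan is to reduce the observation to two prior results: Theorem~\ref{thm: main} and Lemma~\ref{lem: operations preserve cycles}(ii). The equivalence consists of two directions, and I would argue each in turn.

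For the ``if'' direction, I would treat the two alternatives separately. If $G$ is an Eulerian multiedge with $2k$ parallel edges between its two vertices, then every cycle decomposition necessarily consists of $k$ length-two cycles (one per pair of parallel edges), so $c(G)=\nu(G)=k$. If instead $G=G_1\veident G_2$ with Eulerian $G_1,G_2$ satisfying $c(G_i)=\nu(G_i)$, Lemma~\ref{lem: operations preserve cycles}(ii) gives directly
$$c(G)=c(G_1)+c(G_2)-1=\nu(G_1)+\nu(G_2)-1=\nu(G).$$

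For the ``only if'' direction, I would invoke Theorem~\ref{thm: main} to conclude $G\in\mathcal{G}$. By the recursive definition of $\mathcal{G}$, either $G$ is an Eulerian multiedge---in which case we are done---or $G$ arises from two members of $\mathcal{G}$ via $\vident$ or $\veident$. The vertex-identification case is excluded by biconnectivity: the vertex created by $\vident$ is a cut-vertex (as already observed in the proof of Lemma~\ref{lem: operations preserve cycles}(i)), so a biconnected graph cannot arise this way. Hence $G=G_1\veident G_2$ for some pair, which settles the existential part. For the universal ``for any'' clause, I would take an arbitrary decomposition $G=G_1\veident G_2$ with Eulerian $G_1,G_2$ and apply Lemma~\ref{lem: operations preserve cycles}(ii), yielding
$$\nu(G)-c(G)=\bigl(\nu(G_1)-c(G_1)\bigr)+\bigl(\nu(G_2)-c(G_2)\bigr).$$
Both summands on the right are nonnegative while the left side is zero by assumption, so both must vanish.

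I do not anticipate a substantial obstacle; the only delicate point is that in any decomposition $G=G_1\veident G_2$ of an Eulerian graph the factors $G_1,G_2$ should themselves be Eulerian so that $c$ and $\nu$ are defined on them. This is the implicit convention throughout Section~\ref{sec: construction operations} (compare Lemma~\ref{lem: operations preserve cycles}) and can in any case be checked by inspecting degree contributions at the identified vertex and the endpoints of the new edge. With this in place, the observation is a direct corollary of the structural results proven earlier.
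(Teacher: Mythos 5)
Your proof is correct and follows essentially the same route as the paper: invoke Theorem~\ref{thm: main} to get $G\in\mathcal{G}$, rule out $\vident$ by biconnectivity, and use Lemma~\ref{lem: operations preserve cycles}(ii) to force $c(G_i)=\nu(G_i)$ in any decomposition $G=G_1\veident G_2$. The only difference is that you also spell out the easy ``if'' direction (Eulerian multiedge and the $\veident$ computation), which the paper leaves implicit.
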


\begin{proof}
	By Theorem \ref{thm: main} $G \in \mathcal{G}$.
	Hence $G$ is either an Eulerian multiedge or there exists $G_1, G_2 \in \mathcal{G}$ with $G = G_1 \vident G_2$ or $G = G_1 \veident G_2$.
	The case $G = G_1 \vident G_2$ cannot occur since $G$ is biconnected.
	Now assume that $G_1, G_2$ are graphs with $G = G_1 \veident G_2$.
	Suppose that $c(G_i) < \nu(G_i)$ for some~$i\in\{1,2\}$.
	We obtain by Lemma \ref{lem: operations preserve cycles} that $c(G) = c(G_1) + c(G_2)-1 < \nu(G_1) + \nu(G_2) -1 = \nu(G)$. A contradiction.
\end{proof}

These two observations already give an outline of the whole algorithm: We start by computing the biconnected components of the given graph.
If a biconnected component is of the form $G_1 \veident G_2$, we replace it by $G_1 \cup G_2$ and check if further decomposition is possible.
Corollary \ref{coro: biconnectivity} ensures us that $G_1$ and $G_2$ are still biconnected - hence, it suffices to replace $G_1 \veident G_2$ by $G_1$ and $G_2$ in the list of biconnected components.
If at some point of the algorithm no component allows for further decomposition, the input graph has a unique cycle number if and only if each of the computed components is an Eulerian multiedge.

\begin{definition}[Vertex-Edge Separation]
	Let $G$ be a disjoint union of biconnected graphs.
	Further let $v \in V(G)$ and $e \in E(G)$ be a vertex and an edge in the same component $H$ of $G$.
	We call the tuple $(v, e)$ a \emph{vertex-edge-separator} in $G$ if $H-v-e$ has more than one component.
	Observe that~$(v,e)$ is a vertex-edge-separator if and only if there exist biconnected graphs $H_1,H_2$ with edges $e_i=u_iv_i \in E(H_i)$ for $i=1,2$, such that $H = (H_1, e_1, u_1) \veident (H_2, e_2, u_2)$ where $v$ is the vertex that arises from the identification of $v_1$ and $v_2$ and $e$ is the edge from $u_1$ to $u_2$ in $H$.
	We call the process of replacing $H$ by $H_1 \cup H_2$ in $G$ a \emph{vertex-edge-separation step}. The constructed graph is called \emph{vertex-edge-separation} of $G$. Observe that the constructed graph is again a disjoint union of biconnected graphs by Corollary \ref{coro: biconnectivity}.
\end{definition}

Before we describe the algorithm we will prove a Lemma showing 
that edges which are not contained in a vertex-edge separator at some step of the 
algorithm will never be contained in a vertex-edge separator.
This proof implies that it suffices to check for each vertex only once whether it is contained in a vertex-edge-separator during the algorithm.
\begin{lemma}
	\label{lem:veseparator}
	Let $G$ be a biconnected graph satisfying $G = G_1 \veident G_2$ for graphs $G_1, G_2$.
	Further let $v\in V(G)$ be some vertex in $G$ which is not contained in any vertex-edge separator of $G$.
	Then~$v$ is not contained in any vertex-edge separator of $G_1$ or $G_2$.
\end{lemma}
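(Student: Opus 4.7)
The plan is to prove the contrapositive and, by symmetry, to handle only the $G_1$-case: assume that some vertex $v_0 \in V(G_1)$ identified with $v$ lies in a vertex-edge separator $(v_0, f)$ of $G_1$, and then manufacture from it a vertex-edge separator of $G$ whose vertex coordinate is $v$. The principal tool will be Lemma~\ref{lem: separating sets}, which is tailor-made to transport separating sets from $G_1$ to $G_1 \veident G_2 = G$.

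First I would fix the notation of the operation, writing $G = (G_1, e_1, u_1) \veident (G_2, e_2, u_2)$ with $e_i = u_i v_i$, and denoting by $v^*$ the identified vertex and by $e^*$ the added edge of $G$. By Corollary~\ref{coro: biconnectivity}, both $G_1$ and $G_2$ are biconnected and contain more than one edge, hence are $2$-edge-connected — which is exactly the hypothesis required to apply Lemma~\ref{lem: separating sets}. Under the identification, $v \in V(G)$ corresponds either to $v^* = v_1 = v_2$ or to a unique vertex in $V(G_1) \setminus \{v_1\}$, which I denote by $v_0$ in both cases.

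The heart of the argument is then a short case analysis driven by Lemma~\ref{lem: separating sets}. Setting $S = \{v_0, f\}$, the lemma prescribes a translated set $S' = \{v', f'\}$ in $G$ where $v' = v^*$ if $v_0 = v_1$ and $v' = v_0$ otherwise, and $f' = e^*$ if $f = e_1$ and $f' = f$ otherwise; crucially, in every case $v' = v$. For the witnesses I would pick $w_1, w_2$ in distinct components of $G_1 - v_0 - f$ with $w_1 \neq v_1$, which is always possible because $v_1$ belongs to at most one such component. Lemma~\ref{lem: separating sets} then immediately yields that $S'$ separates $w_1$ from $w_2'$ in $G$, so that $S'$ is a vertex-edge separator of $G$ whose vertex coordinate is $v$, contradicting the assumption on $v$.

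The only bookkeeping subtlety I foresee is to make sure that the borderline case $v_0 = v_1$ and $f = e_1$ is not an issue. It turns out to be vacuous: $e_1$ is incident to $v_1$, so $G_1 - v_1 - e_1 = G_1 - v_1$, which is connected by biconnectivity of $G_1$; hence $(v_1, e_1)$ can never be a vertex-edge separator of $G_1$. Once this observation is recorded, the three remaining combinations are treated uniformly by Lemma~\ref{lem: separating sets}, and the corresponding argument for $G_2$ is obtained by symmetry.
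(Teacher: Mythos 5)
Your proposal is correct and follows essentially the same route as the paper: invoke Corollary~\ref{coro: biconnectivity} to get that $G_1,G_2$ are biconnected (hence $2$-edge-connected), then use Lemma~\ref{lem: separating sets} to transport a putative vertex-edge separator containing $v$ from $G_i$ back to one in $G$, contradicting the hypothesis. You merely spell out the bookkeeping the paper leaves implicit (the choice of witnesses $w_1,w_2$, the case distinction on $v_0=v_1$ and $f=e_1$), whereas the paper shortcuts the identified-vertex case by noting that vertex is automatically in a vertex-edge separator of $G$; both treatments are fine.
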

\begin{proof}
	The graphs $G_1$ and $G_2$ both are biconnected and consequently also 2-edge-connected by Corollary \ref{coro: biconnectivity}.
	As $v$ is not contained in a vertex-edge separator in $G$ it is either contained in~$G_1$ or~$G_2$. 
	Hence, by Lemma \ref{lem: separating sets} $v$ cannot be contained in a vertex edge separator in $G_1$ or~$G_2$. 
\end{proof}

We are now ready to present a formal algorithm and prove its 
correctness. In the description of Algorithm~\ref{alg:test and decompose} we use
the two \emph{black box procedures} \textsc{FindCutEdge} and \textsc{Split}:

\noindent 
\begin{changemargin}{0.6cm}{0.6cm}
	$\textbf{\textsc{FindCutEdge}(G)}$ returns a cut-edge of $G$ if one exists and \texttt{Nil} else.
	
	\medskip
	\noindent
	$\textbf{\textsc{Split}(G, v)}$ 
	gets a graph $G$ and a cut-vertex $v \in V(G)$ as input.
	Let $G_1$, $G_2$ be graphs satisfying $G=(G_1,v_1)\vident (G_2,v_2)$ where $v$ is
	the vertex that arises from identifying $v_1$ with~$v_2$.
	The procedure returns $G_1 \cup G_2$, $v_1$ and $v_2$.
\end{changemargin}

When implementing the algorithm the two procedures would rather be done at the same time using a slightly 
modified version of the \emph{lowpoint algorithm} for finding
biconnected components by Hopcroft and Tarjan, cf.~\cite{Hopcroft1973}.
We merely state it in the presented way to better catch the intuition behind the algorithm.

\begin{algorithm}
	\begin{algorithmic}[1]{TestAndDecompose$(G, v)$}
		\INPUT{Graph~$G$ and vertex $v\in V(G)$.}
		\OUTPUT{Graph~$G$, vertices $v_1$, $v_2$ }
		\STATE $e =u_1u_2 = \textsc{FindCutEdge}(G-v)$
		\IF{$e$ is not \texttt{Nil}}
		\STATE $G=G-e$
		\STATE $G, v_1, v_2 = \textsc{Split}(G, v)$
		\STATE Add an edge between $u_1$ and $v_1$ to $G$.
		\STATE Add an edge between $u_2$ and $v_2$ to $G$.
		\STATE \RETURN $G$, $v_1$, $v_2$
		\ELSE
		\STATE \RETURN $G$, \texttt{Nil}, \texttt{Nil}
		\ENDIF
	\end{algorithmic}
	\caption{Test if vertex is contained in a vertex-edge separator, if so apply vertex-edge separation}\label{alg:test and decompose} 
\end{algorithm}

\begin{algorithm}
	\begin{algorithmic}[1]{VE-Components$(G)$}
		\INPUT{Biconnected graph~$G$.}
		\OUTPUT{Disjoint union $G$ of biconnected graphs.}
		\STATE $S\coloneqq V$
		\WHILE{$S\neq\emptyset$}
		\STATE Take out arbitrary $v\in S$
		\STATE $G,v_1, v_2 =\textsc{TestAndDecompose}(G, v)$\label{algstep:update}
		\IF{$v_1\neq\texttt{Nil}$}
		\STATE Add $v_1$ and $v_2$ to $S$.
		\ENDIF
		\ENDWHILE
		\STATE\RETURN $G$
	\end{algorithmic}
	\caption{Computation of vertex-edge-components using vertex-edge separation}\label{alg:components} 
\end{algorithm}

\begin{theorem}
	Given a biconnected graph $G$ with $n$ vertices and $m$ edges Algorithm~\ref{alg:components} returns a graph $G^\prime$ that does 
	not contain a vertex-edge separator. The graph $G$ can be obtained from~$G^\prime$ by
	repeated vertex-edge-identification of connected components of $G^\prime$.
	Algorithm~\ref{alg:components} can be implemented
	to run in time $\mathcal{O}(n\cdot(m+n))$.
\end{theorem}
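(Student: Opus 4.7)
The plan is to prove correctness through a loop invariant and then bound the running time. For correctness, I would maintain the following invariant: after every iteration of the \texttt{while} loop in Algorithm~\ref{alg:components}, every vertex $w$ of the current graph $G$ with $w\notin S$ lies in no vertex-edge separator of $G$. Initially $S=V$ so the invariant holds vacuously; upon termination $S=\emptyset$, so the invariant forces $G$ to contain no vertex-edge separator, establishing the first claim. The reconstruction claim is then immediate, since every call to \textsc{TestAndDecompose} that performs a split executes exactly one vertex-edge-separation, which is by definition the inverse of a vertex-edge-identification; reversing the sequence of performed splits rebuilds the original graph.

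To verify that the invariant is preserved in one iteration, I would first note that $v$ is contained in some vertex-edge separator of its biconnected component $H$ if and only if $H-v$ contains a cut-edge, so \textsc{TestAndDecompose} correctly detects the relevant case via \textsc{FindCutEdge}. If no cut-edge is found, $v$ lies in no vertex-edge separator and removing $v$ from $S$ preserves the invariant. Otherwise, a vertex-edge-separation is performed, replacing $H$ by $H_1\cup H_2$ with $v$ split into $v_1,v_2$ (both re-added to $S$); for every vertex $w\neq v$ previously outside $S$, Lemma~\ref{lem:veseparator} guarantees that $w$ is still not contained in any vertex-edge separator of the biconnected component now containing it, so the invariant persists.

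For the running time, I would bound the number of iterations and the cost per iteration separately. Each vertex-edge-separation increases the total vertex count and the number of connected components by exactly one. Since every produced component is biconnected and every biconnected graph with at least one edge has at least two vertices, performing $j$ splits in total yields $j+1$ components with $n+j$ vertices overall, so $n+j\geq 2(j+1)$ and hence $j\leq n-2$. The total number of \texttt{while}-loop iterations is therefore at most $n+2j\leq 3n=\mathcal{O}(n)$. Each call to \textsc{TestAndDecompose} is dominated by \textsc{FindCutEdge}, which runs in $\mathcal{O}(n+m)$ via a DFS-based bridge-finding routine; \textsc{Split} is likewise $\mathcal{O}(n+m)$. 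Multiplying yields the claimed $\mathcal{O}(n(n+m))$ bound.

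The main obstacle is establishing preservation of the loop invariant, which rests essentially on Lemma~\ref{lem:veseparator}. Without this lemma, after every split the entire vertex set of the affected component would have to be re-examined, potentially blowing up the iteration count. The lemma ensures that once a vertex has been certified as lying in no vertex-edge separator, this status survives every subsequent split, so only the two freshly created split-vertices must be re-examined.
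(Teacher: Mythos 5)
Your proposal is correct and follows essentially the same route as the paper: correctness rests on Lemma~\ref{lem:veseparator} (your loop invariant is just a slightly more formal phrasing of the paper's argument that a vertex removed from $S$ never re-enters a vertex-edge separator), and the running time is obtained by the same counting argument ($k+1$ components, each with at least two vertices, giving $k\leq n-2$ separation steps) combined with an $\mathcal{O}(n+m)$ DFS-based implementation of each call to \textsc{TestAndDecompose}. Your explicit remark on reconstructing $G$ by reversing the separation steps is a welcome, if minor, addition the paper leaves implicit.
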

\begin{proof}
	Note that each time a vertex-edge separation step is applied the number of vertices, edges and components 
	of $G$ all increase by $1$. The size of the largest component never increases though and at least 
	one component becomes smaller. Thus, Algorithm~\ref{alg:components} terminates. 
	
	Let $G$ be a biconnected graph and $G^\prime$ the graph
	returned by the algorithm starting with $G$.
	By Lemma~\ref{lem:veseparator} any vertex that is not contained in a vertex-edge separator 
	in a graph $G$ is also not contained in a vertex-edge separator in $G_1 \cup G_2$ with
	$G=G_1\veident G_2$. As every vertex in $G^\prime$ is at some point contained in the set $S$
	and, when regarded, is only kept in the graph if it is not contained in a vertex-edge separator, no vertex of $G^\prime$ 
	can be contained in a vertex-edge separator. This proves the correctness of the Algorithm.
	
	Now we discuss the running time of the algorithm.
	If $G$ contains only one vertex, the algorithm terminates after the first iteration, as no vertex-edge separation step can be applied.
	Now assume that~$n\geq 2$.
	Algorithm~\ref{alg:components} never creates a component with only one vertex.
	Thereby any component of $G^\prime$ contains at least two vertices. Let $k$ be the 
	number of vertex-edge separation steps taken during the whole procedure. 
	We get that the number of components in $G^\prime$ is exactly $k+1$, so the 
	number of vertices in~$G^\prime$ is at least $2\cdot(k+1)$. As in each iteration exactly one additional 
	vertex is added to the graph, we know that $|V(G^\prime)|=k+n$. Thus, $k+n\geq 2\cdot(k+1)$ which implies~$k\leq n-2$.
	As already mentioned, we can
	find a cut-edge and split the graph using a slightly altered 
	version of the usual \emph{lowpoint algorithm} for finding biconnected components, cf.~\cite{Hopcroft1973}.
	This algorithm can be implemented to run in time $\mathcal{O}(n+m)$. 
	Thus any call to \textsc{TestAndDecompose} needs at most time $\mathcal{O}(n+m)$.
	Altogether Algorithm~\ref{alg:components}
	can be implemented to run in time~$\mathcal{O}(n(n+m))$.
\end{proof}

Next we want to use Algorithm~\ref{alg:components} to find out 
if the cycle number of a biconnected graph $G$ is unique. As pointed out 
earlier, we will do this by simply testing if all components remaining,
after Algorithm~\ref{alg:components} has terminated, are Eulerian 
multiedges.
\begin{algorithm}
	\begin{algorithmic}[1]{isCycleNumberUnique$(G)$}
		\INPUT{Biconnected graph~$G$.}
		\OUTPUT{$\begin{cases}
			\texttt{True},&\text{ if cycle number is unique,}\\
			\texttt{False},& \text{else.}
			\end{cases}$}
		\STATE $H=\textsc{VE-Components}(G)$
		\FORALL{$v\in V(H)$}
		\IF{$|N(v)|\neq 1$ or $\#$ of incident edges is odd}
		\STATE\RETURN\texttt{False}
		\ENDIF
		\ENDFOR
		\STATE \RETURN\texttt{True}
	\end{algorithmic}
	\caption{Test if cycle number of a graph is unique}\label{alg:cycleunique} 
\end{algorithm}

\begin{theorem}
	\label{thm:cycleunique}
	Algorithm~\ref{alg:cycleunique} correctly decides if the cycle number of 
	a biconnected graph $G$ with $n$ vertices and $m$ edges is unique. It can be implemented to run in 
	time $\mathcal{O}(n(m+n))$.
\end{theorem}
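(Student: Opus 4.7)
The plan is to combine the correctness statement for Algorithm \ref{alg:components} with Theorem \ref{thm: main} and the corollary to Lemma \ref{lem: operations preserve cycles}. Recall that after line 1 the graph $H = \textsc{VE-Components}(G)$ is a disjoint union of biconnected graphs, no component of which contains a vertex-edge separator, and $G$ is recoverable from the components of $H$ by a sequence of $\veident$-operations. The subsequent loop returns \texttt{True} iff every vertex $v \in V(H)$ satisfies $|N(v)| = 1$ and has even degree; since each component of $H$ is biconnected and hence contains at least two vertices, this condition is equivalent to every component of $H$ being an Eulerian multiedge.

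It therefore suffices to prove that $c(G) = \nu(G)$ if and only if every connected component of $H$ is an Eulerian multiedge. The ``if'' direction is immediate: each Eulerian multiedge is a base element of $\mathcal{G}$, so assembling them back into $G$ using only $\veident$ yields $G \in \mathcal{G}$, and Theorem \ref{thm: main} then gives $c(G) = \nu(G)$. For the ``only if'' direction, the corollary to Lemma \ref{lem: operations preserve cycles} shows that $\nu(X) - c(X)$ is additive under $\veident$, and it is non-negative for every Eulerian graph $X$. Hence $\nu(G) - c(G) = 0$ forces $\nu(H_i) - c(H_i) = 0$ on every component $H_i$ of $H$, and a second application of Theorem \ref{thm: main} puts each $H_i$ in $\mathcal{G}$.

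The main obstacle will be to show that a biconnected graph $H_i \in \mathcal{G}$ with no vertex-edge separator must be an Eulerian multiedge. By the recursive definition of $\mathcal{G}$, if $H_i$ is not an Eulerian multiedge then $H_i = A \vident B$ or $H_i = A \veident B$ for some $A, B \in \mathcal{G}$. The first case produces a cut vertex in $H_i$ and contradicts biconnectedness. In the second case, let $v$ be the vertex obtained by identifying $v_A$ and $v_B$ and let $e$ be the new edge joining $u_A$ and $u_B$. Every graph in $\mathcal{G}$ has at least two vertices, so $A - v_A - e_A$ and $B - v_B - e_B$ are both non-empty, and the pair $(v, e)$ thus separates $H_i$ into at least two components, contradicting the defining property of $H$. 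Consequently $H_i$ must be an Eulerian multiedge.

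For the running time, line 1 costs $\mathcal{O}(n(n + m))$ by the theorem on Algorithm \ref{alg:components}, and the final loop visits every vertex and each of its incident edges in $\mathcal{O}(n + m)$ total time. The overall bound is therefore $\mathcal{O}(n(n + m))$, as claimed.
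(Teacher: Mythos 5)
Your proof is correct and follows essentially the same route as the paper: reduce correctness to the claim that $c(G)=\nu(G)$ iff every component output by Algorithm~\ref{alg:components} is an Eulerian multiedge, using Theorem~\ref{thm: main} for the ``if'' direction and the additivity from Lemma~\ref{lem: operations preserve cycles} for the ``only if'' direction, with the running time dominated by Algorithm~\ref{alg:components}. The only difference is cosmetic: you argue the necessity contrapositively and spell out explicitly why a biconnected member of $\mathcal{G}$ without a vertex-edge separator must be an Eulerian multiedge, a step the paper leaves implicit.
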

\begin{proof}
	First note, that a graph $H$ is a disjoint union of Eulerian multiedges
	if and only if each vertex has exactly one neighbour and the number of incident
	edges is even. This proves that Algorithm~\ref{alg:cycleunique} returns
	\texttt{True} if and only if the graph $H$ in the algorithm is a collection
	of Eulerian multiedges. The running time is clear as the whole algorithm is
	clearly dominated by the running time of Algorithm~\ref{alg:components}. 
	
	It remains to show that this indeed is a necessary and sufficient
	condition for the graph $G$ to have a unique cycle number. It is easy to
	see that $G$ is contained in $\mathcal{G}$, as in order to create it we 
	only have to do the algorithm backwards. Now assume that $G$ has a unique
	cycle number but Algorithm~\ref{alg:components} does not terminate with
	a collection of Eulerian multiedges. Then there exists a component $H$, which
	is not an Eulerian multiedge and
	does not allow a vertex-edge separation step.
	This implies that $H\notin\mathcal{G}$ and by Theorem~\ref{thm: main}
	we have $\nu(H)>c(H)$. If we now apply Lemma~\ref{lem: operations preserve cycles}
	multiple times, we get that $\nu(G)>c(G)$, which is a contradiction 
	to $G$ having unique cycle number.
\end{proof}

Observe that we can also use Algorithm~\ref{alg:components} to reduce computation
of minimum or maximum cycle number to smaller graphs: 
We simply run the algorithm on a given graph $G$ and compute a minimum respectively maximum cycle
decomposition in the outputted components. By Lemma~\ref{lem: operations preserve cycles} 
we can then puzzle the cycle 
decomposition together in order to obtain a minimum or maximum cycle
decomposition of $G$.

\section*{Acknowledgement} 
We thank Till Heller, Sebastian Johann, Sven O.\ Krumke and Eva Schmidt for their 
helpful comments on this article.

\bibliography{references}
\bibliographystyle{alpha}

\end{document}